\documentclass[11pt,letterpaper]{amsart}

\usepackage[T1]{fontenc}
\usepackage{ae}
\usepackage{amsmath,amssymb,amsthm,parskip}
\usepackage[colorlinks=true, citecolor=red, linkcolor=red]{hyperref}

\newtheorem{theorem}{Theorem}[section]
\newtheorem{proposition}[theorem]{Proposition}
\newtheorem{lemma}[theorem]{Lemma}
\newtheorem{corollary}[theorem]{Corollary}

\numberwithin{equation}{section}

\newcommand{\supp}{\operatorname{supp}}

\title[Superpolynomial Support in Irreducible Factors]{Superpolynomially Large Support in Every Irreducible Factor of Lacunary Polynomials}
\author{Bhawesh Mishra}
\email{bhaweshmishra2024@gmail.com}
\address{384 Dunn Hall, Department of Mathematical Sciences, The University of Memphis, Memphis, TN 38152}

\subjclass[2020]{Primary 11C08, 11R09; Secondary  12E05}
\keywords{Lacunary polynomial, sparse factorization, cyclotomic polynomial,
Mahler measure, Hilbert irreducibility}
\hypersetup{
  pdftitle={Support of Irreducible Factors of Lacunary Polynomials},
  pdfauthor={Bhawesh Mishra},
  pdfsubject={Sparse irreducible factors of lacunary polynomials},
  pdfkeywords={lacunary polynomial, sparse factorization, cyclotomic polynomial,
    Mahler measure, Hilbert irreducibility}
}

\begin{document}

\begin{abstract}
We show that there exist infinitely many polynomials $F\in\mathbb{Q}[x]$ with exactly $m$ nonzero coefficients such that every irreducible factor of \(F\) over \(\mathbb{Q}\) has
\(\exp\) \(\!\bigl(\Omega(\sqrt{m/\log m})\bigr)\) nonzero coefficients. This places multiplication in a sharply different category from powers, composition, and other algebraic operations for which reverse-sparsity principles are known. It also gives an unconditional, degree-free obstruction to sparse factorization, complementing positive factor-sparsity results that impose hypotheses on coefficient height, exponent positions, reciprocal structure, or degree bounds.
\end{abstract}

\maketitle

\section{Introduction}\label{sec:introduction}
A polynomial is called \emph{sparse}, or \emph{lacunary}, when it has few
nonzero coefficients relative to its degree. Factorization can reverse this
sparsity: cancellations may leave a polynomial with few nonzero terms even
though all of its irreducible factors have many nonzero terms. We are interested in the basic
degree-free question of whether the term count of a polynomial nevertheless
forces at least one irreducible factor to remain quantitatively sparse. We
show that it does not: the term count of the sparsest irreducible factor can
be forced to grow faster than every fixed power of the term count of the
original polynomial.

For a nonzero polynomial \(P(x)=\sum_{j=0}^{d}a_jx^j\in\mathbb{Q}[x]\),
write \(\supp(P)=\{j\in\{0,\ldots,d\}:a_j\neq0\}\); thus
\(|\supp(P)|\) is the number of nonzero terms of \(P\). For a nonconstant
\(F\in\mathbb{Q}[x]\), set
\(\lambda(F)=\min\{|\supp(G)|:G\mid F,\ G\text{ irreducible in }
\mathbb{Q}[x]\}\). The quantity \(\lambda(F)\) measures the sparsity of
the sparsest irreducible factor of \(F\). We determine how large it can be
in terms of \(|\supp(F)|\) alone, with neither degree nor coefficient height
bounded.

\begin{theorem}\label{thm:main-introduction}
There exist absolute constants \(c>0\) and \(M\) such that, for every
integer \(m\geq M\), there is a polynomial \(F\in\mathbb{Q}[x]\) with
\(|\supp(F)|=m\) such that every irreducible factor \(G\)
of \(F\) over \(\mathbb{Q}\) satisfies
\begin{equation}\label{eq:main-bound}
    |\supp(G)|
      >\exp\!\left(c\sqrt{\frac{m}{\log m}}\right).
\end{equation}
\end{theorem}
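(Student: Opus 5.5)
The plan is an explicit construction in which a large cyclotomic polynomial is forced to divide \(F\), combined with a Hilbert irreducibility argument that controls everything else. Let \(p_1<\dots<p_k\) be the first \(k\) odd primes and \(n=p_1\cdots p_k\). For each \(i\), the polynomial
\[
B_i(x)=\Phi_{p_i}\bigl(x^{n/p_i}\bigr)=\frac{x^n-1}{x^{n/p_i}-1}
\]
has exactly \(p_i\) nonzero terms. Its zeros are precisely the roots of unity \(\zeta\) whose order \(d\) divides \(n\) and is divisible by \(p_i\). I would set
\[
F(x)=\sum_{i=1}^k a_i\,x^{e_i}B_i(x),
\]
with rational parameters \(a_i\) and shifts \(e_i\). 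The shifts are chosen large and well separated, so that the supports of the summands are disjoint and no cancellation occurs. Then \(|\supp(F)|=\sum_{i\le k}p_i\asymp k^2\log k\), and the count can be adjusted to exactly \(m\) by appending a few harmless monomials or by fine-tuning which primes are used.

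By the prime number theorem, \(k\asymp\sqrt{m/\log m}\). So it suffices to show that every irreducible factor of \(F\) has at least \(e^{ck}\) terms.

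The key steps, in order, are the following.

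1. \emph{Cyclotomic part.} A primitive \(n\)-th root of unity is a common zero of all the \(B_i\), so \(\Phi_n\mid F\). For generic \(a_i\), a root of unity of order \(d\) with \(d\neq n\) or \(d\nmid n\) is not a zero of \(F\). Indeed, for such \(d\) some \(B_j(\zeta)\) is nonzero, so \(F(\zeta)\) is a nontrivial linear form in the \(a_i\). This leaves \(F=\Phi_n\cdot H\), with \(H\) having no cyclotomic factors.

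2. \emph{Irreducibility of \(H\).} I would treat the \(a_i\) as indeterminates and show that \(H\) is irreducible over \(\mathbb{Q}(a_1,\dots,a_k)\). The natural route is to specialize all but two of the parameters and use a Capelli or Eisenstein-type argument in the remaining parameter. Hilbert irreducibility then gives infinitely many rational specializations for which \(H\) stays irreducible. This is where the ``infinitely many \(F\)'' in the abstract would come from.

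3. \emph{Support of \(H\).} Since \(\deg H=\deg F-\varphi(n)\) and \(\deg F\) is at least \(n\), the degree of \(H\) is enormous. I would bound \(|\supp(H)|\) from below through a gap argument. A polynomial with few terms and huge degree has a long gap. Splitting \(H\) at that gap and multiplying by \(\Phi_n\) would produce a pattern in \(F\) that is incompatible with the disjoint supports of the \(x^{e_i}B_i\), provided the \(e_i\) are chosen so that every gap of \(F\) is controlled. Alternatively, the \(a_i\) can be chosen so that the Mahler measure of \(H\), which is at most \(\|F\|_2\), cannot accommodate a very sparse factor of this degree.

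4. \emph{Support of \(\Phi_n\).} I need \(|\supp(\Phi_n)|\ge e^{ck}\) for \(n\) the product of the first \(k\) odd primes. The plan is to compare two quantities:
   \begin{itemize}
   \item the growth of \(\max_{|z|=1}|\Phi_n(z)|\), which is doubly exponential in \(k\) when evaluated near roots of unity of order \(n/p_i\);
   \item the bound \(t\cdot A(n)\), where \(t\) is the number of terms and \(A(n)\) the height.
   \end{itemize}
   Alongside this, I would use structural information on the low-degree coefficients. Modulo \(x^{p_1}\), \(\Phi_n\) equals \(\prod_{d\mid n}(1-x^d)^{\mu(n/d)}\), truncated, and similar truncations at many shifted positions should each force a nonzero coefficient.

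I expect step 4 to be the main obstacle. The naive sup-norm comparison does not directly yield a contradiction, because the height of \(\Phi_n\) is itself huge. A genuine combinatorial or analytic lower bound on the number of nonzero coefficients of \(\Phi_n\), exponential in the number of prime factors, will be needed. If this fails, I would instead choose \(n\) as a product of carefully spaced primes, for which the coefficient structure of \(\Phi_n\) is better understood, or replace \(\Phi_n\) by a product over several such \(n\).
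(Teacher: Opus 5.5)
\textbf{There is a genuine gap: the proposal never proves either support bound, which are the whole content of the theorem.} Your construction (disjointly shifted geometric sums $\Phi_{p_i}(x^{n/p_i})$ sharing the factor $\Phi_n$, then Hilbert irreducibility) is the paper's skeleton.

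\emph{Step 3 fails.} The gap argument only shows that inside each block $x^{e_i}D_i$, with $D_i=B_i/\Phi_n$, consecutive exponents differ by at most $\varphi(n)$. A larger gap would split $B_i=\Phi_nU+x^g\Phi_nV$ with disjoint supports. Then a proper truncation $1+y+\cdots+y^j$, with $y=x^{n/p_i}$ and $j<p_i-1$, would be divisible by $\Phi_n$. This is impossible, since $\zeta^{n/p_i}$ is a primitive $p_i$-th root of unity for $\zeta$ a primitive $n$-th root. So the argument gives at best $|\supp(D_i)|\ge\deg D_i/\varphi(n)$. Since $\deg D_i<n$ and $n/\varphi(n)\asymp\log k$ by Mertens, that is order $\log k$ per block. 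The resulting bound for $|\supp(H)|$ is of order $k\log k$, which is below $m$, let alone $e^{ck}$. The Mahler-measure alternative points the wrong way: an upper bound on $M(H)$ alone never forces many terms, since $x^d-1$ has measure one and two terms.

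\emph{Step 4 is left open.} You correctly flag it, but comparing $\max_{|z|=1}|\Phi_n(z)|$ with $t\cdot A(n)$ is useless unless the height $A(n)$ is bounded in terms of the number of terms $t$.

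\textbf{The missing ingredients.} The bound you lack in Step 4 is the paper's first key tool: the degree-free Akhtari--Vaaler inequality $|c_j|\le\binom{s-1}{j}M(P)$ for an $s$-term polynomial. It gives $|P(z)|\le2^{s-1}$ on $|z|=1$ whenever $M(P)=1$.

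The second tool is a pair of points of the unit circle where $\Phi_N$ is doubly exponentially large and doubly exponentially small. The paper takes $N$ square-free with all $r$ prime factors $\equiv2,3\pmod5$. Your $n$ contains $5$ and primes $\equiv\pm1\pmod5$, so it does not qualify. It then uses the Bzd\k{e}ga--Herrera-Poyatos--Moree evaluation $|\Phi_N(\zeta_\pm)|=\alpha^{\pm2^{r-1}}$ at primitive fifth roots.
\begin{itemize}
\item At $\zeta_+$, Akhtari--Vaaler forces $|\supp(\Phi_N)|\gg2^r$.
\item At $\zeta_-$, $|C_p(\zeta_-)|\ge\kappa$ uniformly in $p$. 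Each $D_p=C_p/\Phi_N$ is a monic product of cyclotomic polynomials, so $M(D_p)=1$. It satisfies $|D_p(\zeta_-)|\ge\kappa\alpha^{2^{r-1}}$ and therefore has $\gg2^r$ terms.
\end{itemize}
The disjoint blocks then give $|\supp(Q)|=\sum_j|\supp(D_{\ell_j})|$. Note that this is applied to each $D_p$ separately, not to $H$ itself, so cancellation in $F(\zeta_-)$ cannot interfere.

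\textbf{Two smaller points.}
\begin{itemize}
\item Appending monomials to adjust the term count would destroy $\Phi_n\mid F$. Instead, repeat some $B_i$ with fresh shifts; the paper repeats the primes $2$ and $3$.
\item Step 2 is immediate once you note that $H=\sum_ia_ix^{e_i}D_i$ is linear in the $a_i$ with coefficients of greatest common divisor one. Hence $H$ is irreducible over $\mathbb{Q}(a_1,\dots,a_k)$, and Hilbert irreducibility with the coordinate hyperplanes excluded yields nonzero rational $a_i$. This also makes Step 1 unnecessary.
\end{itemize}
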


The lower bound in \eqref{eq:main-bound} dominates \(Cm^A\) for every fixed
\(C,A>0\). Consequently, for every \(A>0\) and all sufficiently large
\(m\), there is an \(m\)-term polynomial \(F\) for which
\(\lambda(F)>m^A\). In particular, no estimate
\(\lambda(F)\leq C|\supp(F)|^A\), with fixed \(C\) and \(A\), can hold
for all rational polynomials. 

\subsection{Previous Work on Sparse Factorization}\label{subsec:previous-works}
Reverse-sparsity phenomena are known for several algebraic operations.
Erd\H{o}s and R\'{e}nyi asked whether
\(|\supp(P)|\to\infty\) forces \(|\supp(P^2)|\to\infty\)
\cite{Renyi1947,Erdos}, and Schinzel proved the corresponding statement
for every fixed power \cite{Schinzel87}. Analogous rigidity results in the literature concern
polynomial composition \cite{Zannier}, rational-function composition
\cite{FuchsZannier2012}, and polynomial solutions of monic algebraic
equations with fewnomial data \cite{FuchsMantovaZannier2018}. For fixed powers, sparsity of \(P^k\) constrains \(P\); for polynomial
composition, sparsity of the composite constrains the inner polynomial; for
rational-function composition, sparsity of the composite bounds the degree of
the outer function apart from explicitly described exceptional inner functions; and for monic algebraic equations, fewnomial data impose rigidity on polynomial solutions. 
Multiplication differs because its factors vary independently and their
coefficient contributions may cancel; Theorem~\ref{thm:main-introduction}
shows that this destroys every polynomial bound relating the product's term
count to that of its sparsest irreducible factor.

Positive lacunary-factorization results retain information beyond the term
count. Schinzel developed large-gap methods systematically
\cite{Schinzel1969,Schinzel1970}, and Sawin, Shusterman, and Stoll obtained
effective irreducibility results for \(x^Nc(x^{-1})+d(x)\) under explicit
hypotheses \cite{SawinShustermanStoll2020}. Pinner and Vaaler bounded the
number of irreducible cyclotomic factors using the degree and number of
monomials, independently of the coefficients \cite{PinnerVaaler1996}.
For fixed term count and coefficient height, Filaseta, Granville, and
Schinzel obtained a bounded-term nontrivial factor for each reducible
nonreciprocal integer polynomial, and corresponding bounds for every factor
when reciprocal factors are excluded \cite{FilasetaGranvilleSchinzel2008}.
Amoroso and Sombra proved a finiteness theorem for irreducible factorizations
in bivariate Laurent-polynomial families with fixed coefficients and varying
exponents \cite{AmorosoSombra2019}. These results control factor occurrence,
number, or support only through additional structure absent from
Theorem~\ref{thm:main-introduction}.

The same distinction appears in algorithmic work too. Sparse multivariate
factorization and the complementary sparse-multiple problem are treated,
respectively, in \cite{vonZurGathenKaltofen1985} and
\cite{GiesbrechtRocheTilak2012}; Plaisted's NP-hardness results give a
separate complexity-theoretic obstruction \cite{Plaisted1977}.
Bounded-degree factors of lacunary polynomials are algorithmically accessible
in the univariate and multivariate settings
\cite{Lenstra1999Factorization,Lenstra1999SmallDegree,
KaltofenKoiran2005,AvendanoKrickSombra2007,Grenet2016}, while multilinear,
multiquadratic, and bounded-individual-degree regimes admit further
structural and algorithmic results
\cite{ChattopadhyayGrenetKoiranPortierStrozecki2021,
Volkovich2015,Volkovich2017,BhargavaSarafVolkovich2020,
ChuyoonShpilka2026}. Here degree dependence is essential: since
\(|\supp(G)|\leq\deg G+1\), every factor supplied by
Theorem~\ref{thm:main-introduction} has degree eventually exceeding every
fixed polynomial in \(m\). Exact-division algorithms likewise retain the
quotient support or total sparse output size as a complexity parameter
\cite{GiorgiGrenetPerretDuCray2021,
GiorgiGrenetPerretDuCrayRoche2022,NahshonShpilka2026}; our construction
shows that this parameter cannot generally be replaced by any fixed power
of the dividend's term count. The theorem is an algebraic output-size lower
bound, not by itself a Turing-model lower bound, since sparse bit length
also records exponents and coefficient heights.

Finally, Schinzel recorded the extremal function
\[
 K(m)=\sup\{\lambda(F):F\in\mathbb Q[x],\ |\supp(F)|=m\},
\]
where the supremum is allowed to be infinite \cite{Schinzel95}. Using a
sparse-square theorem of Verdenius, Choudhry and Schinzel proved
\(K(m)>\max\{2m,0.014m^{1.22}\}\) for \(m>2\)
\cite{CS92,Verdenius}; Bremner improved the numerical lower bounds over a
large finite range \cite{Bremner}. Theorem~\ref{thm:main-introduction}
gives \(K(m)>\exp(c\sqrt{m/\log m})\) for all sufficiently large \(m\), and
hence \(K(m)>m^A\) eventually for every fixed \(A>0\).

\subsection{A Naive Coefficient-Variety Heuristic}
One way to see the utility of our result is in the light of a \textit{naive} coefficient-variety heuristic. More specifically, let \(A,B\subseteq\mathbb Z_{\geq0}\), with \(|A|=r+1\) and
\(|B|=s+1\), and write
\[
 G_{\mathbf a}=\sum_{u\in A}a_ux^u,\qquad
 H_{\mathbf b}=\sum_{v\in B}b_vx^v,\qquad
 q_n(\mathbf a,\mathbf b)
 =\sum_{\substack{u\in A,\ v\in B\\u+v=n}}a_ub_v.
\]
Let \(T_{A,B}\subseteq\mathbb P^r\times\mathbb P^s\) be the locus where
all \(a_u,b_v\) are nonzero. For \(S\subseteq A+B\), define
\[
 X_{A,B,S}=
 \left(T_{A,B}\cap\bigcap_{n\notin S}V(q_n)\right)
 \setminus\bigcup_{n\in S}V(q_n),
\]
where \(n\) ranges over \(A+B\). Then \(X_{A,B,S}(\mathbb Q)\)
parametrizes, up to independent rescaling, factor pairs with supports
\(A,B\) and product support \(S\). Thus the direct approach is a
rational-point search on these varieties.

Each \(q_n\) has bidegree \((1,1)\). If \(r+s-1\) required divisors meet
transversely, their intersection is a smooth curve \(C\). Adjunction
\cite[Chapter~II, Proposition~8.20, p.~182]{Hartshorne1977} and
Stirling's formula give \(g(C)=\exp(\Theta_c(r+s))\) as
\(r,s\to\infty\) with
\(c\le r/s\le c^{-1}\). Because the \(q_n\) are special, this is only a
complete-intersection heuristic.

No unconditional general algorithm is known for deciding whether a
smooth projective curve of genus at least two over \(\mathbb Q\) has a
rational point
\cite[Questions~1.1--1.2 and the subsequent discussion,
pp.~181--182]{BruinStoll2008}; known proofs of Faltings's theorem provide
no such algorithm \cite[\S1, p.~484]{Stoll2026}. Moreover, the incidence
equations encode cancellations, not irreducibility. Indeed, \(x^2-t\)
is reducible over \(\mathbb Q\) precisely when \(t\) is a rational
square, while both squares and nonsquares are Zariski dense in
\(\mathbb A^1_{\mathbb Q}\); irreducibility therefore cannot generally
be imposed by deleting a Zariski-closed subset. Absent additional
structure, the naive strategy is decisively out of reach as a uniform
proof method: it requires solving a rational-point problem for which no
unconditional general procedure is known, and even success there leaves
the independent arithmetic problem of irreducibility. This difficulty also sheds light on why analogous positive sparsity results in literature are obtained by restricting either the height of the coefficients or degrees of factors or working within specialized subfamilies of polynomials (see Subsection \ref{subsec:previous-works}). 

\subsection{An Overview of Our Proof}\label{subsec:proof-architecture}
We establish Theorem \ref{thm:main-introduction} by separating the construction into three distinct
tasks: forcing large support in a common cyclotomic factor, forcing large
support in every possible cofactor used in the construction, and combining
those cofactors into a single irreducible polynomial without cancellation.

Let $N$ be square-free with $r$ prime divisors, all congruent to $2$ or $3$
modulo $5$.  A formula for cyclotomic values at roots of unity due to
Bzd\k{e}ga, Herrera-Poyatos, and Moree
\cite{BzdegaHerreraMoree2018} gives two primitive fifth roots
$\zeta_+$ and $\zeta_-$ for which
\[
    |\Phi_N(\zeta_+)|=\alpha^{2^{r-1}}
    \qquad\text{and}\qquad
    |\Phi_N(\zeta_-)|=\alpha^{-2^{r-1}},
\]
where $\alpha>1$ is an absolute constant.  For each prime $p\mid N$, put
\[
    C_p(x)=\frac{x^N-1}{x^{N/p}-1}
          =1+x^{N/p}+\cdots+x^{(p-1)N/p},
    \qquad
    D_p(x)=\frac{C_p(x)}{\Phi_N(x)}.
\]
The polynomial $C_p$ has exactly $p$ terms, whereas $\Phi_N$ and every $D_p$
are monic products of cyclotomic polynomials and hence have Mahler measure
one.  Moreover, $|C_p(\zeta_-)|$ is bounded below uniformly in $p$ and $N$.
It follows that $D_p$ is exponentially large at $\zeta_-$, uniformly for
every $p\mid N$.

An inequality of Akhtari and Vaaler implies that if a polynomial $P$ has
Mahler measure one and $s$ nonzero coefficients, then
\[
    |P(z)|\leq 2^{s-1}\qquad (|z|=1)
\]
\cite{AkhtariVaaler2019}.  Applying this estimate at $\zeta_+$ to $\Phi_N$
and at $\zeta_-$ to every $D_p$ yields an absolute constant $c_0>0$ such
that
\[
    |\operatorname{supp}(\Phi_N)|>c_0 2^r,
    \qquad
    |\operatorname{supp}(D_p)|>c_0 2^r
    \quad (p\mid N).
\]
The two opposite cyclotomic evaluations are the amplification mechanism:
the large value controls the common factor, while the small value of that
factor makes all the quotients large simultaneously.

The cyclotomic factorizations of the $D_p$ show that
\(\gcd_{p\mid N}D_p=1\).  Choose a list
\(\ell_0,\ldots,\ell_s\) of prime divisors of $N$, containing every prime
divisor at least once, and choose widely separated exponents $h_j$.  The
generic linear combination
\[
    \sum_{j=0}^s T_jx^{h_j}D_{\ell_j}(x)
\]
is irreducible over $\mathbb{Q}(T_0,\ldots,T_s)$ because its coefficients
have greatest common divisor one.  Hilbert irreducibility then supplies
nonzero rational specializations $T_j=t_j$ for which
\[
    Q(x)=\sum_{j=0}^s t_jx^{h_j}D_{\ell_j}(x)
\]
is irreducible over $\mathbb{Q}$ \cite{Serre2008}.  The shifts $h_j$ are
chosen so that the supports of the summands are disjoint.  Consequently
there is no cancellation, and
\[
    F(x)=\sum_{j=0}^s t_jx^{h_j}C_{\ell_j}(x)
        =\Phi_N(x)Q(x)
\]
has exactly $\sum_j\ell_j$ terms, while the support of $Q$ is the disjoint
sum of the supports of the corresponding $D_{\ell_j}$.  Thus $\Phi_N$ and
$Q$ are precisely the two nonassociate irreducible factors of $F$, and both
have more than $c_0 2^r$ terms.

Finally, primes congruent to $2$ or $3$ modulo $5$ are chosen up to a
parameter $X$.  The prime number theorem in arithmetic progressions gives
\[
    r\asymp \frac{X}{\log X},
    \qquad
    \sum_{\substack{p\leq X\\ p\equiv 2,3\pmod 5}}p
       \asymp \frac{X^2}{\log X}
\]
\cite{Koukoulopoulos2019}.  Taking $X$ of order
\(\sqrt{m\log m}\) gives
\(r\gg\sqrt{m/\log m}\), while leaving enough room to represent every
sufficiently large $m$ as $\sum_j\ell_j$ by adding copies of the primes $2$
and $3$.  Hence the construction applies to every sufficiently large $m$,
not merely to a subsequence, and
\[
    2^r>\exp\!\left(c\sqrt{\frac{m}{\log m}}\right)
\]
after adjusting the absolute constant $c$.

\section{Mahler Measure and Exponential Amplification at Primitive Fifth Roots}
\subsection{A large value on the unit circle forces many coefficients}

The first ingredient is a way to deduce that a polynomial has many
nonzero coefficients without knowing those coefficients individually.
If
$P(x)=a\prod_{j=1}^d(x-\alpha_j)$ is a nonzero complex polynomial, its
Mahler measure is
\begin{equation}\label{eq:Mahler-measure}
 M(P)=|a|\prod_{j=1}^d\max\{1,|\alpha_j|\}.
\end{equation}
Every cyclotomic polynomial has Mahler measure one, since all of its roots
lie on the unit circle and it is monic.  The same is true of every monic
product of cyclotomic polynomials.

The following theorem controls each coefficient in terms of the number of
nonzero monomials rather than the degree.  That distinction is essential
here, because the gaps between the exponents will be very large. This is \cite[Theorem~1.1, p.~1426]{AkhtariVaaler2019}.  The source writes
the number of monomials as $N+1$; in \eqref{eq:AV-bound}, its $N$ is
$s-1$, and its exponent $m_j$ is denoted by $n_j$.

\begin{theorem}\label{thm:AV}
Let
\[
 P(x)=c_0x^{n_0}+c_1x^{n_1}+\cdots+c_{s-1}x^{n_{s-1}}
\]
be a nonzero polynomial in $\mathbb C[x]$, where
$0\le n_0<n_1<\cdots<n_{s-1}$.  Then
\begin{equation}\label{eq:AV-bound}
 |c_j|\le \binom{s-1}{j}M(P)\qquad(0\le j\le s-1).
\end{equation}
\end{theorem}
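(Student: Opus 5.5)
Theorem~\ref{thm:AV} is quoted from \cite{AkhtariVaaler2019}, so in the paper it suffices to cite it. If I had to reprove it, I would use induction on the number $s$ of monomials, with the Mahler-measure estimate below as the key input. What must be avoided is Mahler's classical argument: it writes $c_j$ as $\pm a$ times an elementary symmetric function of the roots, and so only gives $|c_j|\le\binom{d}{j}M(P)$ with $d=\deg P$. Because the exponents $n_i$ in our applications have huge gaps, the binomial coefficient must depend on $s$ alone. So I would never look at the roots and work only with the monomials.

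The plan is to isolate one coefficient by killing the other monomials with the Euler operator $\theta=x\,\frac{d}{dx}$. The operator $\theta-n_i$ annihilates $x^{n_i}$ and multiplies $x^{n_k}$ by $n_k-n_i$. Fix $j$ and apply $\prod_{i\ne j}(\theta-n_i)$ to $P$. The result is the single monomial
\[
c_j\prod_{i\neq j}(n_j-n_i)\,x^{n_j},
\]
whose Mahler measure is exactly $|c_j|\prod_{i\ne j}|n_j-n_i|$. The theorem would then follow from an upper bound for the Mahler measure of the output in terms of $M(P)$ that exceeds $\prod_{i\ne j}|n_j-n_i|$ by a factor of at most $\binom{s-1}{j}$.

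The steps, in order, are as follows.

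\textbf{Step 1.} Normalize by translation and inversion. Divide by $x^{n_0}$ and, when convenient, replace $P(x)$ by $x^{n_{s-1}}P(1/x)$. Both operations preserve $M(P)$ and the coefficient list, and the second reverses the order, which is consistent with the symmetry $\binom{s-1}{j}=\binom{s-1}{s-1-j}$.

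\textbf{Step 2.} Prove a one-step estimate for $(\theta-a)f$. If $a$ is the smallest or the largest exponent of $f$, then $(\theta-a)f$ has one fewer monomial. The target inequality is $M((\theta-a)f)\le \max_k|n_k-a|\cdot M(f)$, a Mahler-measure analogue of Bernstein's inequality.

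\textbf{Step 3.} Apply Step~2 iteratively, always removing an extreme exponent, so that the term count drops by one each time. First remove the $j$ monomials below $n_j$, then the $s-1-j$ monomials above it.

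\textbf{Step 4.} Compare the product of the maxima from Step~2 with $\prod_{i\ne j}|n_j-n_i|$. The binomial coefficient should appear from the ratios of these gaps, much as in the Lagrange-interpolation identity behind $\binom{s-1}{j}$.

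The main obstacle is Steps~2 and~4 together. Bernstein's inequality holds for sup norms, but $M$ is only a geometric mean, so I am not sure Step~2 holds in the stated form. Even if it does, the ratio $\max_k|n_k-a|/|n_j-a|$ is unbounded when the exponents are badly spaced, so Step~4 cannot be done term by term. A constant depending only on $s$ therefore needs a genuinely degree-free mechanism.

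If the operator route fails, I would try next to regard $P(x)=L(x^{n_0},\dots,x^{n_{s-1}})$ with $L(y)=\sum c_iy_i$ linear. Here the natural objects are the Mahler measure of the linear form together with a Jensen-formula induction on $s$. One variable at a time would be split off: write $P=c_{s-1}x^{n_{s-1}}+R$, and use Jensen's formula on the circle $|x|=1$ together with the inequality $\log|u+v|\ge\log|u|-\log^+|v/u|$. The aim is the recursion $\binom{s-1}{j}=\binom{s-2}{j}+\binom{s-2}{j-1}$, i.e.
\[
|c_j|\le\Bigl(\tbinom{s-2}{j}+\tbinom{s-2}{j-1}\Bigr)M(P),
\]
deduced from the $(s-1)$-term case. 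In that route, the hardest point is proving the required lower bound of $M(P)$ in terms of $M(R)$ and $|c_{s-1}|$.
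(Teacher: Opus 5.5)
Your proposal matches the paper, which also gives no argument of its own and simply imports the bound as Theorem~1.1 of Akhtari--Vaaler (their $N$ being $s-1$). For the record, your optional sketch can be completed: Step~2 does hold, since it is Mahler's inequality $M(g')\le(\deg g)\,M(g)$ (equivalently, the $p=0$ case of Arestov's $L^p$ Bernstein inequality) applied to $g=x^{-a}f$ when $a$ is the lowest exponent and to the reversal of $f$ when $a$ is the highest; the fixed order of Step~3 must be dropped (for exponents $0,1,K$ and $j=1$ it gives only $|c_1|\le K\,M(P)$), but if each stage, with lower gap $u$ and upper gap $v$ to $n_j$, gives its two possible removals the weights $u/(u+v)$ and $v/(u+v)$, then by induction the weights of the $\binom{s-1}{j}$ removal orders sum to $1$, the weight of an order being $\prod_{i\ne j}|n_j-n_i|$ divided by its product of spans, so some order has weight at least $\binom{s-1}{j}^{-1}$, and iterating Step~2 along that order yields exactly $|c_j|\le\binom{s-1}{j}M(P)$.
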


For a polynomial of Mahler measure one, Theorem~\ref{thm:AV} immediately
bounds the sum of the absolute values of its coefficients.  Evaluation at
a point on the unit circle is therefore bounded as well.

\begin{corollary}\label{cor:unit-evaluation}
Let $P\in\mathbb C[x]$ have Mahler measure one and exactly $s$ nonzero
coefficients.  If $|z|=1$, then $|P(z)|\leq 2^{s-1}$. If, in addition,
$P(z)\neq0$, then
\begin{equation}\label{eq:support-from-value}
 s\ge 1+\frac{\log|P(z)|}{\log2}.
\end{equation}
\end{corollary}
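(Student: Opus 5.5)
The plan is to apply Theorem~\ref{thm:AV} coefficientwise and sum. Write $P(x)=\sum_{j=0}^{s-1}c_jx^{n_j}$ with $0\le n_0<\cdots<n_{s-1}$ and all $c_j\neq0$; this is possible because $P$ has exactly $s$ nonzero coefficients. Since $M(P)=1$, \eqref{eq:AV-bound} gives $|c_j|\le\binom{s-1}{j}$ for each $j$.

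For $|z|=1$ we have $|z^{n_j}|=1$, so the triangle inequality yields
\[
|P(z)|\le\sum_{j=0}^{s-1}|c_j|\le\sum_{j=0}^{s-1}\binom{s-1}{j}=2^{s-1},
\]
by the binomial theorem. This is the first claim.

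For the second claim, assume $P(z)\ne0$. Then $0<|P(z)|\le 2^{s-1}$, so taking logarithms gives $\log|P(z)|\le(s-1)\log2$, and dividing by $\log 2>0$ gives \eqref{eq:support-from-value}. The inequality is only informative when $|P(z)|>1$, but it holds in every case.

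None of these steps is a real obstacle. The only points to check are the index correspondence with the source, which is handled by the remark preceding Theorem~\ref{thm:AV} (its $N$ is $s-1$), and the hypothesis $P(z)\neq0$, which is needed only so that the logarithm is defined.
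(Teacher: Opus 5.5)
Your proposal is correct and follows the paper's proof exactly. Like the paper, you bound each coefficient by $\binom{s-1}{j}$ via Theorem~\ref{thm:AV}, use the triangle inequality on $|z|=1$ and the binomial sum to get $2^{s-1}$, and then take logarithms when $P(z)\neq0$.
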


\begin{proof}
Write the nonzero terms of $P$ in increasing order of their exponents, as
in Theorem~\ref{thm:AV}.  Since $|z|=1$, the triangle inequality and
\eqref{eq:AV-bound} give
\[
 |P(z)|\le\sum_{j=0}^{s-1}|c_j|
 \le\sum_{j=0}^{s-1}\binom{s-1}{j}=2^{s-1}.
\]
If $P(z)\neq0$, taking logarithms and rearranging proves
\eqref{eq:support-from-value}.
\end{proof}

Thus a value of size $\exp(L)$ at a point of the unit circle forces at
least a constant multiple of $L$ nonzero coefficients.  The remainder of
the proof is devoted to producing such values simultaneously for the two
factors in the construction.

\subsection{Exponential amplification at primitive fifth roots}

Write $\omega(n)$ for the number of distinct prime divisors of $n$, and
$\Omega(n)$ for the number of prime divisors counted with multiplicity.
The special role of the residue classes $2$ and $3$ modulo $5$ comes from
the following cyclotomic evaluation.

\begin{theorem}[Bzd\k{e}ga--Herrera-Poyatos--Moree, specialized to
$m=5$]\label{thm:fifth-root}
Let $n>1$ be coprime to $5$, and suppose that no prime divisor of $n$ is
congruent to $1$ or $-1$ modulo $5$.  If $\xi$ is a primitive fifth root
of unity, then
\begin{equation}\label{eq:fifth-root-value}
 \log|\Phi_n(\xi)|
 =(-1)^{\Omega(n)-1}2^{\omega(n)-1}\log|1+\xi|.
\end{equation}
\end{theorem}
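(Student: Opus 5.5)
The plan is to prove Theorem~\ref{thm:fifth-root} by induction on $\omega(n)$. The engine is the standard identities
\[
\Phi_{np}(x)=\frac{\Phi_n(x^p)}{\Phi_n(x)}\quad(p\nmid n),\qquad \Phi_{np}(x)=\Phi_n(x^p)\quad(p\mid n).
\]
Throughout, $\xi$ is a primitive fifth root of unity. Every prime $p$ allowed by the hypothesis is coprime to $5$ and satisfies $p\equiv\pm2\pmod 5$. So $\xi^p$ is again a primitive fifth root, namely $\xi^{\pm2}$.

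The key arithmetic input is a pair of facts about $g(\xi):=|1+\xi|=2|\cos(\pi k/5)|$ for $\xi=e^{2\pi ik/5}$. First, $g(\xi^{-1})=g(\xi)$. Second, $g(\xi)g(\xi^2)=1$. The second follows from the golden-ratio values $2\cos(\pi/5)=\varphi$ and $2\cos(2\pi/5)=\varphi^{-1}$. Equivalently, $(1+\xi)(1+\xi^2)(1+\xi^3)(1+\xi^4)=\Phi_5(-1)=1$, combined with the symmetry under inversion. Consequently, for $p\equiv\pm2\pmod5$,
\[
\log|1+\xi^p|=-\log|1+\xi|,
\]
while for $p\equiv\pm1\pmod5$ we would have $\log|1+\xi^p|=\log|1+\xi|$.

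\textbf{Base case.} For a prime $n=p$, we compute
\[
\Phi_p(\xi)=\frac{\xi^p-1}{\xi-1}.
\]
Writing $\xi^p=\xi^{\pm2}$, we have $|\xi^{2}-1|/|\xi-1|=|1+\xi|$. Also $|\xi^{-2}-1|=|\xi^2-1|$. Hence $\log|\Phi_p(\xi)|=\log|1+\xi|$, which matches $(-1)^{0}2^{0}$. Prime powers then follow from the second identity, since $\Phi_{p^k}(\xi)=\Phi_p(\xi^{p^{k-1}})$. Each application of $p$ replaces $\xi$ by $\xi^{\pm2}$, which flips the sign of $\log|1+\cdot|$. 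This gives $(-1)^{k-1}\log|1+\xi|$, consistent with $\Omega=k$ and $\omega=1$.

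\textbf{Induction step.} Suppose the formula holds for $n$ (for every primitive fifth root). Take a new prime $p\nmid n$ with $p\equiv\pm2\pmod 5$. Then
\[
\log|\Phi_{np}(\xi)|=\log|\Phi_n(\xi^p)|-\log|\Phi_n(\xi)|.
\]
By hypothesis this equals
\[
(-1)^{\Omega(n)-1}2^{\omega(n)-1}\bigl(\log|1+\xi^p|-\log|1+\xi|\bigr)=(-1)^{\Omega(n)}2^{\omega(n)}\log|1+\xi|.
\]
This is exactly the claim for $np$, since $\Omega$ and $\omega$ each increase by one. Raising an existing prime to a higher power uses $\Phi_{np}(\xi)=\Phi_n(\xi^p)$, which flips the sign and increases only $\Omega$. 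Building $n$ from its prime factorization in this way completes the induction.

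The only delicate point I anticipate is nonvanishing. We need $\Phi_n(\xi)\neq0$ so that the logarithms make sense, and this holds because $5\nmid n$. We also need to be careful that the hypothesis is applied at $\xi^p$, which is legitimate because the induction is over all primitive fifth roots simultaneously. The identity $g(\xi)g(\xi^2)=1$ is the main conceptual step. It is also the only place where the restriction to the classes $\pm2\pmod5$ is used.
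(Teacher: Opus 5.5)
Your proof is correct. It also takes a genuinely different route from the paper, because the paper does not prove this statement at all: it imports it verbatim as the $m=5$ case of Corollary~22 of Bzd\k{e}ga--Herrera-Poyatos--Moree, a general result on values of cyclotomic polynomials at roots of unity.

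\textbf{How your route differs.} You induct on $\Omega(n)$ using $\Phi_{np}(x)=\Phi_n(x^p)/\Phi_n(x)$ for $p\nmid n$ and $\Phi_{np}(x)=\Phi_n(x^p)$ for $p\mid n$. The arithmetic input is the unit relation $|1+\xi|\,|1+\xi^2|=1$, which comes from $\Phi_5(-1)=1$. This gives a complete, self-contained replacement for the citation. The unit relation is exactly the computation the paper performs immediately after the theorem to identify the values $\alpha^{\pm1}$. So your argument would let the paper drop the external reference at essentially no extra cost.

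\textbf{What your route makes transparent.} It shows why the classes $\pm2\pmod 5$ matter. For such $p$, the substitution $\xi\mapsto\xi^p$ replaces $|1+\xi|$ by its reciprocal. The two terms in $\log|\Phi_n(\xi^p)|-\log|\Phi_n(\xi)|$ therefore reinforce, and the coefficient doubles with each new prime. For $p\equiv\pm1$ they would cancel, since $|\Phi_n(\xi^{\pm1})|=|\Phi_n(\xi)|$. Your treatment of the two delicate points is exactly what is needed: nonvanishing holds because $n\neq5$, and inducting over all primitive fifth roots at once licenses applying the hypothesis at $\xi^p$.

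\textbf{A small correction.} Your closing remark is slightly inaccurate. The congruence hypothesis is also used directly in the base case, where $\xi^p=\xi^{\pm2}$ is what makes $|\Phi_p(\xi)|=|1+\xi|$ rather than $1$.

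\textbf{What the citation buys.} It gives generality, namely roots of unity of other orders $m$. The paper never uses this; it needs only the absolute values at $m=5$, which your argument supplies.
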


This is precisely the $m=5$ case of
\cite[Corollary~22, pp.~223--224]{BzdegaHerreraMoree2018}; the other
three values of $m$ in the source are not used.  Its notation $\xi_5$
has been replaced by $\xi$, and its quantity $\gamma_5$ is $1+\xi$.

We now identify two primitive fifth roots at which the right-hand side of
\eqref{eq:fifth-root-value} has the same magnitude but opposite signs.
Let $\zeta=e^{2\pi i/5}$ and put $\alpha=|1+\zeta|$.  Since
$\alpha=2\cos(\pi/5)>1$, this is a fixed real number greater than one.
Complex conjugation gives
$|1+\zeta|=|1+\zeta^4|$ and
$|1+\zeta^2|=|1+\zeta^3|$.  On the other hand,
the roots of $\Phi_5$ are $\zeta,\zeta^2,\zeta^3,\zeta^4$, and
$\Phi_5(-1)=1$; hence
\[
 \prod_{a=1}^4(1+\zeta^a)=\Phi_5(-1)=1.
\]
Taking absolute values shows that
$|1+\zeta^2|=|1+\zeta^3|=\alpha^{-1}$.

\begin{corollary}\label{cor:large-small}
Let $N$ be square-free, and suppose that all its $r$ prime divisors are
congruent to $2$ or $3$ modulo $5$.  If $r\ge2$, there are primitive
fifth roots $\zeta_+$ and $\zeta_-$ such that
\begin{equation}\label{eq:large-small}
 |\Phi_N(\zeta_+)|=\alpha^{2^{r-1}},
 \qquad
 |\Phi_N(\zeta_-)|=\alpha^{-2^{r-1}}.
\end{equation}
\end{corollary}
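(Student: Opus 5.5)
Apply Theorem~\ref{thm:fifth-root} with $n=N$ and $\xi$ ranging over the primitive fifth roots. First we check the hypotheses. Every prime divisor of $N$ is congruent to $2$ or $3$ modulo $5$, so none is $5$, and $N$ is coprime to $5$. None is congruent to $\pm1$ modulo $5$. Since $r\ge2$, we also have $N>1$. Because $N$ is square-free, $\omega(N)=\Omega(N)=r$. Hence, for every primitive fifth root $\xi$,
\[
 \log|\Phi_N(\xi)|=(-1)^{r-1}2^{r-1}\log|1+\xi|.
\]

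Next we use the computation preceding the corollary. We have $|1+\zeta|=|1+\zeta^4|=\alpha$ and $|1+\zeta^2|=|1+\zeta^3|=\alpha^{-1}$. So $\log|1+\xi|$ equals $+\log\alpha$ when $\xi\in\{\zeta,\zeta^4\}$ and $-\log\alpha$ when $\xi\in\{\zeta^2,\zeta^3\}$. Substituting gives
\[
 |\Phi_N(\zeta)|=\alpha^{(-1)^{r-1}2^{r-1}},\qquad |\Phi_N(\zeta^2)|=\alpha^{(-1)^{r}2^{r-1}}.
\]

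We then split by the parity of $r$. If $r$ is odd, take $\zeta_+=\zeta$ and $\zeta_-=\zeta^2$. If $r$ is even, take $\zeta_+=\zeta^2$ and $\zeta_-=\zeta$. In either case \eqref{eq:large-small} holds.

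No step here is a real obstacle. The only points needing care are the translation $\omega(N)=\Omega(N)=r$, which uses square-freeness, and the parity-dependent choice of which root plays the role of $\zeta_+$. The hypothesis $r\ge2$ is stronger than needed, since $r\ge1$ already gives $N>1$; presumably it is kept for later use.
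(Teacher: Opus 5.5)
Your proposal is correct and follows the paper's proof: you verify the hypotheses of Theorem~\ref{thm:fifth-root}, use $\omega(N)=\Omega(N)=r$ from square-freeness, and assign $\zeta_+,\zeta_-$ among $\zeta,\zeta^2$ according to the sign $(-1)^{r-1}$, splitting by the parity of $r$ explicitly where the paper simply names the roots by sign. Your extra check that $N>1$ is accurate, and so is your remark that $r\ge1$ would suffice.
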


\begin{proof}
The hypotheses of Theorem~\ref{thm:fifth-root} are all satisfied: no
prime divisor of $N$ is $5$, and the only permitted residue classes are
$2$ and $3$, neither of which is congruent to $1$ or $-1$ modulo $5$.
Because $N$ is square-free, $\Omega(N)=\omega(N)=r$.  Consequently,
for either $\xi=\zeta$ or $\xi=\zeta^2$, formula
\eqref{eq:fifth-root-value} becomes
\[
 \log|\Phi_N(\xi)|
 =(-1)^{r-1}2^{r-1}\log|1+\xi|.
\]
The two possible values of $\log|1+\xi|$ are $\log\alpha$ and
$-\log\alpha$.  Assign the name $\zeta_+$ to the root for which the RHS has positive sign, and $\zeta_-$ to the other root.  Exponentiating
gives \eqref{eq:large-small}.
\end{proof}

The large value in \eqref{eq:large-small} will force $\Phi_N$ to have
many coefficients.  The small value will be used in the opposite way:
dividing by $\Phi_N(\zeta_-)$ will make a whole family of quotients large.

\section{Construction Using Sparse Geometric Sums}

\subsection{Sparse Geometric Sums}
Let $N$ satisfy the hypotheses of Corollary~\ref{cor:large-small}.  For
each prime $p$ dividing $N$, define
\begin{equation}\label{eq:C-and-D}
 C_p(x)=\frac{x^N-1}{x^{N/p}-1}
       =1+x^{N/p}+\cdots+x^{(p-1)N/p},
 \qquad
 D_p(x)=\frac{C_p(x)}{\Phi_N(x)}.
\end{equation}
The exponents $0,N/p,\ldots,(p-1)N/p$ are distinct and their coefficients
are all one, so $C_p$ has exactly $p$ nonzero coefficients.  We next verify
that $D_p$ is a polynomial and record its cyclotomic factors explicitly.

For every positive integer $a$,
$x^a-1=\prod_{d\mid a}\Phi_d(x)$.  Applying this identity to the numerator
and denominator in \eqref{eq:C-and-D} gives
\[
 C_p(x)
 =\frac{\prod_{d\mid N}\Phi_d(x)}
        {\prod_{d\mid N/p}\Phi_d(x)}
 =\prod_{\substack{d\mid N\\d\nmid N/p}}\Phi_d(x).
\]
Because $N$ is square-free, a divisor $d$ of $N$ fails to divide $N/p$
exactly when $p$ divides $d$.  Indeed, if $p$ does not divide $d$, then
every prime divisor of $d$ occurs in the square-free integer $N/p$, so
$d$ divides $N/p$; the converse is immediate because $p$ does not divide
$N/p$.  Therefore
\begin{equation}\label{eq:C-factorization}
 C_p(x)=\prod_{\substack{d\mid N\\p\mid d}}\Phi_d(x).
\end{equation}
The factor with $d=N$ is $\Phi_N$, so division by it gives
\begin{equation}\label{eq:D-factorization}
 D_p(x)=\prod_{\substack{d\mid N,\ p\mid d\\d\ne N}}\Phi_d(x).
\end{equation}
Every index $d$ in \eqref{eq:D-factorization} is greater than one, so each
factor $\Phi_d$ is monic with constant coefficient one.  Consequently,
$D_p\in\mathbb Z[x]$ is monic, has constant coefficient one, and has
Mahler measure one.

The family of all the quotients has no common nonconstant factor.  This
fact will later make it possible to combine them into one irreducible
polynomial.

\begin{lemma}\label{lem:gcd}
As $p$ ranges over the prime divisors of $N$, the polynomials $D_p$ have
greatest common divisor one in $\mathbb Q[x]$.
\end{lemma}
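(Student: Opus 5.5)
The plan is to work entirely with the explicit cyclotomic factorization \eqref{eq:D-factorization}. Every $D_p$ is a product of distinct cyclotomic polynomials, and the $\Phi_d$ are pairwise coprime irreducible polynomials in $\mathbb Q[x]$. Hence the greatest common divisor of the family is the product of those $\Phi_d$ that occur in every $D_p$. So it suffices to show that no index $d$ appears in all the factorizations simultaneously.

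Suppose $\Phi_d$ divides $D_p$ for every prime $p\mid N$. By unique factorization in $\mathbb Q[x]$ and the irreducibility of $\Phi_d$, it equals (up to a unit) one of the factors $\Phi_{d'}$ listed in \eqref{eq:D-factorization} for each $p$. Distinct indices give distinct cyclotomic polynomials (they have different degrees or different roots), so $d$ itself must satisfy, for every $p\mid N$, the conditions $d\mid N$, $p\mid d$, and $d\neq N$. Since $N$ is square-free and every prime divisor of $N$ divides $d$, the product of all these primes, which is $N$, divides $d$. Together with $d\mid N$ this gives $d=N$, contradicting $d\neq N$.

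Consequently no irreducible factor is common to all the $D_p$. Their gcd is therefore a nonzero constant, i.e.\ one after normalization to be monic. The only point needing care is the identification of irreducible factors: $\Phi_d$ is irreducible over $\mathbb Q$, and $\Phi_d\neq\Phi_{d'}$ for $d\neq d'$ because the roots of $\Phi_d$ are exactly the primitive $d$-th roots of unity. I do not expect any step to be a genuine obstacle; the argument is a direct bookkeeping consequence of \eqref{eq:D-factorization}. Note that the hypothesis $r\ge2$ is not even needed here, although for $r=1$ each $D_p$ is simply $1$.
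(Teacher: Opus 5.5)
Your proposal is correct and follows essentially the same route as the paper. Both arguments read off the factorization \eqref{eq:D-factorization} and observe that a common factor $\Phi_d$ would force every prime divisor of $N$ to divide $d$, hence $d=N$ by square-freeness, contradicting the exclusion of $\Phi_N$ from every $D_p$.
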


\begin{proof}
Cyclotomic polynomials are irreducible over $\mathbb Q$ and two of them
with different indices are nonassociate.  Suppose that $\Phi_d$ divided $D_p$ for every
prime $p$ dividing $N$.  Formula \eqref{eq:D-factorization} would then
imply that every prime divisor $p$ of $N$ divides $d$.  Since $N$ is
square-free, this says that $N$ divides $d$.  The same formula also
requires $d$ to divide $N$, and hence $d=N$.  But the factor $\Phi_N$ is
excluded from every product in \eqref{eq:D-factorization}.  No
irreducible polynomial can therefore divide all the $D_p$.
\end{proof}

We can now apply the two primitive fifth roots from
Corollary~\ref{cor:large-small}.  The root $\zeta_+$ controls the common
factor, whereas $\zeta_-$ controls every quotient at once.

\begin{proposition}\label{prop:simultaneous-support}
There is an absolute constant $c_0>0$ such that, whenever $r$ is
sufficiently large,
\begin{equation}\label{eq:simultaneous-support}
 |\supp(\Phi_N)|>c_0 2^r
 \quad\text{and}\quad
 |\supp(D_p)|>c_0 2^r
 \quad\text{for every prime }p\mid N.
\end{equation}
\end{proposition}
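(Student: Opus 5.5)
The plan is to apply Corollary~\ref{cor:unit-evaluation} twice: to $\Phi_N$ at $\zeta_+$, and to each $D_p$ at $\zeta_-$. Both $\Phi_N$ and every $D_p$ are monic products of cyclotomic polynomials, so each has Mahler measure one. For $\Phi_N$, Corollary~\ref{cor:large-small} gives $|\Phi_N(\zeta_+)|=\alpha^{2^{r-1}}\neq0$. Then \eqref{eq:support-from-value} yields
\[
 |\supp(\Phi_N)|\ge 1+\frac{2^{r-1}\log\alpha}{\log 2}.
\]
So any $c_0<\log\alpha/(2\log 2)$ works for the first inequality.

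For $D_p$, write $D_p(\zeta_-)=C_p(\zeta_-)/\Phi_N(\zeta_-)$. This is legitimate because $\Phi_N(\zeta_-)\neq0$ by \eqref{eq:large-small}; alternatively, $N\neq5$ since $5\nmid N$. It remains to bound $|C_p(\zeta_-)|$ from below uniformly in $p$ and $N$, and this is the only real step. Since $5\nmid N$ and $5\nmid N/p$, both $\zeta_-^N$ and $\zeta_-^{N/p}$ are primitive fifth roots of unity, and in particular neither equals $1$. Hence
\[
 C_p(\zeta_-)=\frac{\zeta_-^N-1}{\zeta_-^{N/p}-1}.
\]
The numerator has modulus $|\eta-1|$ for some primitive fifth root $\eta$, so it is at least $2\sin(\pi/5)$. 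The denominator has modulus at most $2$. Therefore $|C_p(\zeta_-)|\ge\sin(\pi/5)=:\beta>0$, with $\beta$ absolute.

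Combining, $|D_p(\zeta_-)|\ge\beta\,\alpha^{2^{r-1}}$. Corollary~\ref{cor:unit-evaluation} then gives
\[
 |\supp(D_p)|\ge 1+\frac{2^{r-1}\log\alpha+\log\beta}{\log 2}.
\]
Since $\log\beta$ is a fixed negative constant, the right side exceeds $c_0 2^r$ for all sufficiently large $r$ once $c_0<\log\alpha/(2\log2)$. This explains the hypothesis that $r$ be large. For instance, $c_0=\log\alpha/(4\log 2)$ works for both inequalities when $r$ is large enough.

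I expect no serious obstacle. The one point needing care is the uniform lower bound on $|C_p(\zeta_-)|$, which comes entirely from the fact that $5$ is coprime to $N$.
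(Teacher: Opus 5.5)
Your proposal is correct and takes the paper's route: you apply Corollary~\ref{cor:unit-evaluation} at $\zeta_+$ to $\Phi_N$ and at $\zeta_-$ to each $D_p$, and you get a uniform lower bound on $|C_p(\zeta_-)|$ from $5\nmid N$ and $5\nmid N/p$. The only difference is cosmetic. You use the explicit constant $\sin(\pi/5)$, obtained by bounding the denominator by $2$, where the paper uses $\kappa=\delta_{\min}/\delta_{\max}$. Both lead to the same choice $c_0=\log\alpha/(4\log 2)$.
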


\begin{proof}
Write $s(P)=|\supp(P)|$.  Corollary~\ref{cor:unit-evaluation} says
that if $P$ has Mahler measure one, $|z|=1$, and $P(z)\ne0$, then
\[
 s(P)\ge 1+\frac{\log|P(z)|}{\log2}.
\]

First consider $\Phi_N$.  Since $\Phi_N$ is cyclotomic, it has Mahler
measure one.  The first equality in \eqref{eq:large-small} gives
\[
 |\Phi_N(\zeta_+)|=\alpha^{2^{r-1}},
\]
which is nonzero.  Applying Corollary~\ref{cor:unit-evaluation} with
$P=\Phi_N$ and $z=\zeta_+$ therefore gives
\begin{equation}\label{eq:Phi-support}
 |\supp(\Phi_N)|
 \ge 1+\frac{\log|\Phi_N(\zeta_+)|}{\log2}
 =1+\frac{2^{r-1}\log\alpha}{\log2}.
\end{equation}

We next treat the quotients.  Define
\[
 \delta_{\min}=\min_{1\le a\le4}|1-\zeta_-^a|,
 \qquad
 \delta_{\max}=\max_{1\le a\le4}|1-\zeta_-^a|,
 \qquad
 \kappa=\frac{\delta_{\min}}{\delta_{\max}}.
\]
Because $\zeta_-$ has order five, each of
$\zeta_-,\zeta_-^2,\zeta_-^3,\zeta_-^4$ is different from $1$.
Consequently, $\delta_{\min}>0$ and hence $\kappa>0$.  Moreover, the
powers $\zeta_-^a$, $1\le a\le4$, run through all the nontrivial fifth
roots of unity.  Thus $\delta_{\min}$, $\delta_{\max}$, and $\kappa$
are absolute constants, independent of the particular choice of the
primitive fifth root $\zeta_-$.

Fix a prime $p\mid N$.  Since every prime divisor of $N$ is congruent
to $2$ or $3$ modulo $5$, neither $N$ nor $N/p$ is divisible by $5$.
It follows that both $\zeta_-^N$ and $\zeta_-^{N/p}$ are nontrivial
fifth roots of unity.  In particular, the denominator below is
nonzero, and the definition of $C_p$ gives
\[
 |C_p(\zeta_-)|
 =\left|\frac{\zeta_-^N-1}{\zeta_-^{N/p}-1}\right|
 =\frac{|\zeta_-^N-1|}{|\zeta_-^{N/p}-1|}.
\]
The numerator is at least $\delta_{\min}$, whereas the denominator is
at most $\delta_{\max}$.  Therefore
\begin{equation}\label{eq:C-lower-value}
 |C_p(\zeta_-)|
 \ge\frac{\delta_{\min}}{\delta_{\max}}
 =\kappa
 \qquad(p\mid N).
\end{equation}

Since $C_p=\Phi_ND_p$, we have
\[
 |D_p(\zeta_-)|
 =\frac{|C_p(\zeta_-)|}{|\Phi_N(\zeta_-)|}.
\]
The second equality in \eqref{eq:large-small} states that
\[
 |\Phi_N(\zeta_-)|=\alpha^{-2^{r-1}}.
\]
Combining this with \eqref{eq:C-lower-value} yields
\begin{equation}\label{eq:D-large-value}
 |D_p(\zeta_-)|
 \ge\frac{\kappa}{\alpha^{-2^{r-1}}}
 =\kappa\alpha^{2^{r-1}}.
\end{equation}
In particular, $D_p(\zeta_-)\ne0$.

By \eqref{eq:D-factorization}, every $D_p$ is a monic product of
cyclotomic polynomials and hence has Mahler measure one.  Applying
Corollary~\ref{cor:unit-evaluation} with $P=D_p$ and $z=\zeta_-$,
and then using \eqref{eq:D-large-value}, gives
\begin{align}
 |\supp(D_p)|
 &\ge 1+\frac{\log|D_p(\zeta_-)|}{\log2} \notag\\
 &\ge 1+\frac{\log\kappa+2^{r-1}\log\alpha}{\log2}.
 \label{eq:D-support}
\end{align}

Set
\[
 c_0=\frac{\log\alpha}{4\log2}>0.
\]
Since $\kappa>0$ and $\alpha>1$ are absolute constants, there is an
absolute integer $r_0$ such that
\[
 -\log\kappa\le 2^{r-2}\log\alpha
 \qquad(r\ge r_0).
\]
For such $r$,
\[
 \log\kappa+2^{r-1}\log\alpha
 \ge 2^{r-2}\log\alpha.
\]
Hence \eqref{eq:D-support} implies, uniformly for every prime $p\mid N$,
\[
 |\supp(D_p)|
 \ge 1+\frac{2^{r-2}\log\alpha}{\log2}
 =1+c_0 2^r
 >c_0 2^r.
\]
Likewise, \eqref{eq:Phi-support} gives
\[
 |\supp(\Phi_N)|
 \ge 1+\frac{2^{r-1}\log\alpha}{\log2}
 >\frac{2^{r-2}\log\alpha}{\log2}
 =c_0 2^r.
\]
This proves both assertions in
\eqref{eq:simultaneous-support}.
\end{proof}

\subsection{Combining all the quotients into one irreducible factor}

We next turn the family $D_p$ into a single irreducible polynomial while
retaining every coefficient counted in Proposition~\ref{prop:simultaneous-support}.
The first lemma explains why independent coefficient indeterminates give
irreducibility before specialization.

\begin{lemma}[Generic linear combination]\label{lem:generic}
Let $A_0,\ldots,A_s\in\mathbb Q[x]$ be nonzero polynomials with
$\gcd(A_0,\ldots,A_s)=1$, and suppose that at least one $A_j$ is
nonconstant.  If $T_0,\ldots,T_s$ are algebraically independent over
$\mathbb Q$, then
\begin{equation}\label{eq:generic-polynomial}
 P(x,T_0,\ldots,T_s)=\sum_{j=0}^sT_jA_j(x)
\end{equation}
is irreducible in $\mathbb Q(T_0,\ldots,T_s)[x]$.
\end{lemma}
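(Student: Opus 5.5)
The plan is to pass from the field $K=\mathbb Q(T_0,\ldots,T_s)$ to the polynomial ring $R=\mathbb Q[T_0,\ldots,T_s]$ and use Gauss's lemma. $R$ is a UFD, so a polynomial in $R[x]$ that is primitive over $R$ and irreducible in $R[x]$ is irreducible in $K[x]$. Thus I will regard $P$ as an element of the polynomial ring $\mathbb Q[x,T_0,\ldots,T_s]=R[x]$ and prove two things: $P$ is primitive as a polynomial in $x$ over $R$, and $P$ admits no factorization in $R[x]$ into two non-units of which one has positive $x$-degree. Primitivity alone suffices for the first point. Since $P$ is homogeneous of degree one in the $T$'s, a common divisor in $R$ of its $x$-coefficients would have to divide $P$ in $\mathbb Q[x,T]$. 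Hence it has $T$-degree at most one, and by bidegree it is either a nonzero constant or of the form $\sum_j \lambda_j T_j$ with $\lambda_j\in\mathbb Q$. The latter would force $A_j=\lambda_j B(x)$ for some $B\in\mathbb Q[x]$. Every $A_j\neq0$ then gives $\lambda_j\ne0$, and $B$ is a common divisor of the $A_j$, so $B$ is constant. That makes every $A_j$ constant, contradicting the hypothesis that some $A_j$ is nonconstant.

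For the second point, suppose $P=UV$ in $\mathbb Q[x,T_0,\ldots,T_s]$. Total degree in the variables $T$ is additive, and $P$ has $T$-degree exactly one. So one factor, say $U$, has $T$-degree zero, meaning $U=U(x)\in\mathbb Q[x]$, and $V$ has $T$-degree one. Because $P$ is linear in the $T$'s with no $T$-free term, $V$ must be $\sum_j T_jB_j(x)$ (a $T$-free part of $V$ would produce a $T$-free part of $P$, since $U\ne0$). Comparing coefficients of $T_j$ then gives $A_j=UB_j$ for every $j$. Hence $U$ divides $\gcd(A_0,\ldots,A_s)=1$, so $U$ is a nonzero constant. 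This rules out every nontrivial factorization, in particular every one with a factor of positive $x$-degree.

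Combining the two points: $P$ is nonconstant in $x$ (because some $A_j$ is nonconstant and the $T_j$ are independent), primitive, and irreducible in $R[x]$. Gauss's lemma then gives irreducibility in $K[x]$. I expect the only delicate step to be the precise application of Gauss's lemma. Over the UFD $R$, a factorization in $K[x]$ into factors of positive $x$-degree clears denominators to a factorization in $R[x]$ whose factors still have positive $x$-degree, and the degree-in-$T$ argument above excludes this. The additivity of $T$-degree holds because $\mathbb Q[x]$ is a domain, so leading $T$-homogeneous components multiply nontrivially.
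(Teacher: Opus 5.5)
Your proof is correct and follows the paper's route. Your core step is exactly the paper's argument for irreducibility in $\mathbb Q[x,T_0,\ldots,T_s]$: additivity of $T$-degree forces one factor into $\mathbb Q[x]$, and comparing coefficients of each $T_j$ together with $\gcd(A_0,\ldots,A_s)=1$ makes that factor constant. The only difference is the passage to $\mathbb Q(T_0,\ldots,T_s)[x]$: you quote Gauss's lemma, whereas the paper clears denominators directly and uses that $P$ is prime in that UFD. Your separate primitivity check is valid but redundant, since an irreducible element of $R[x]$ of positive $x$-degree is automatically primitive.
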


\begin{proof}
First regard $P$ as an element of
$R=\mathbb Q[x,T_0,\ldots,T_s]$.  If $P=UV$ in $R$, then additivity of
the total degree in $T_0,\ldots,T_s$ shows that one of $U,V$ is
independent of all these indeterminates, because $P$ has total degree one
in them.  After interchanging the two factors, write that factor as
$U=g(x)\in\mathbb Q[x]$.  Comparison of the coefficient of each $T_j$ in
$P=gV$ shows that $g$ divides every $A_j$.  Their greatest common divisor
is one, so $g$ is a nonzero rational constant.  Hence $P$ is irreducible
in $R$.

Put $K=\mathbb Q(T_0,\ldots,T_s)$. Suppose that $P=GH$ in $K[x]$. Choose
nonzero $a,b\in\mathbb Q[T_0,\ldots,T_s]$ so that $U=aG$ and $V=bH$
belong to $R$.  Clearing denominators gives $abP=UV$.  The ring $R$ is a
unique factorization domain, and its irreducible element $P$ is therefore
prime.  Moreover, $P$ cannot divide $ab$, since $ab$ has degree zero in
$x$ and $P$ has positive degree in $x$.  Thus $P$ divides $U$ or $V$;
assume $U=PW$.  Cancelling $P$ from $abP=UV$ gives $ab=WV$.  The left
side has degree zero in $x$, so both $W$ and $V$ have degree zero in
$x$.  Since $H=V/b$, the polynomial $H$ is a unit of $K[x]$.  Every
factorization of $P$ in $K[x]$ is therefore trivial.
\end{proof}

The rational specialization will use two results of Serre.  We first state
his terminology.  Let $k$ be a field of characteristic zero and let $V$ be
an irreducible variety over $k$.  A subset of $V(k)$ is of type
$\mathrm{(C_1)}$ if it is contained in $W(k)$ for some proper closed
subvariety $W$ of $V$.  It is of type $\mathrm{(C_2)}$ if it is contained
in $\pi(V'(k))$ for some irreducible variety $V'$ satisfying
$\dim V'=\dim V$ and some generically surjective morphism
$\pi\colon V'\to V$ of degree at least two.  A subset of $V(k)$ is
\emph{thin} if it is contained in a finite union of subsets of types
$\mathrm{(C_1)}$ and $\mathrm{(C_2)}$.  The variety $V$ has the
\emph{Hilbert property} over $k$ when $V(k)$ is not thin
\cite[Definitions~3.1.1--3.1.2, p.~19]{Serre2008}. The following is taken from \cite[Proposition~3.3.5, p.~24]{Serre2008}, in the polynomial formulation given immediately before that proposition.

\begin{samepage}
\begin{proposition}[Serre's specialization proposition]\label{prop:Serre}
Let $V$ be an irreducible variety over a field $k$ of characteristic
zero, and let
\[
 f(X)=X^d+a_1X^{d-1}+\cdots+a_d
\]
be irreducible over the function field $k(V)$.  Outside a thin subset
of $V(k)$, the functions $a_i$ have no pole and the specialized polynomial
is irreducible over $k$.
\end{proposition}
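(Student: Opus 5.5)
The plan is the classical reduction of Hilbert-type specialization statements to the definition of thinness, by parametrizing the possible factorizations of the specialized polynomial. All sets that arise will be shown to be of type $\mathrm{(C_1)}$ or $\mathrm{(C_2)}$ in the sense of \cite[Definitions~3.1.1--3.1.2]{Serre2008}.

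\textbf{Step 1: remove the poles.} Let $U\subseteq V$ be the dense open subset on which all the $a_i$ are regular. The set $V(k)\setminus U(k)$ is contained in $W(k)$ for the proper closed subvariety $W=V\setminus U$. It is therefore of type $\mathrm{(C_1)}$, and I may restrict attention to points $v\in U(k)$. For such $v$, write $f_v(X)=X^d+a_1(v)X^{d-1}+\cdots+a_d(v)\in k[X]$.

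\textbf{Step 2: parametrize factors.} For each $1\le e\le d-1$, let $Z_e\subseteq U\times\mathbb A^e$ be the closed subscheme of pairs $(v,(b_1,\ldots,b_e))$ for which $g=X^e+b_1X^{e-1}+\cdots+b_e$ divides $f_v$. The divisibility condition is that the remainder of $f_v$ modulo $g$ vanishes. Its coefficients are polynomials in the $b_j$ and the $a_i(v)$, so $Z_e$ is indeed closed.

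The projection $\pi_e\colon Z_e\to U$ is finite, with finite fibres. Each $b_j$ is an elementary symmetric function of a subset of the roots of $f_v$, hence integral over $\mathcal O(U)$, and this gives finiteness. If $f_v$ is reducible over $k$, it has a monic factor of some degree $e\in[1,d-1]$ with coefficients in $k$. Hence
\[
 v\in\bigcup_{e=1}^{d-1}\pi_e\bigl(Z_e(k)\bigr).
\]
It therefore suffices to show that each $\pi_e(Z_e(k))$ is thin.

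\textbf{Step 3: sort the components.} Give $Z_e$ its reduced structure and decompose it into finitely many irreducible components $Z'$. There are two cases.
\begin{itemize}
\item If $\pi_e(Z')$ is not dense in $V$, then its closure is a proper closed subvariety of $V$. So $\pi_e(Z'(k))$ is of type $\mathrm{(C_1)}$.
\item If $\pi_e(Z')$ is dense, then $\pi_e|_{Z'}$ is finite and dominant. Hence it is surjective onto $U$ and $\dim Z'=\dim V$. If its degree is at least two, then $\pi_e(Z'(k))$ is of type $\mathrm{(C_2)}$.
\end{itemize}

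\textbf{Step 4: exclude degree one (the main obstacle).} The remaining case, and the main point, is a dominant component $Z'$ with $\deg(\pi_e|_{Z'})=1$. Then $\pi_e|_{Z'}$ is birational, so $k(Z')=k(V)$. The coordinate functions $b_1,\ldots,b_e$ restricted to $Z'$ then become elements $\beta_1,\ldots,\beta_e\in k(V)$.

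The divisibility relation defining $Z_e$ holds identically on $Z'$. Pulling back to the generic point shows that $X^e+\beta_1X^{e-1}+\cdots+\beta_e$ divides $f$ in $k(V)[X]$. This is a monic factor of degree strictly between $0$ and $d$, contradicting the irreducibility of $f$ over $k(V)$. So this case cannot occur.

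The care needed here is to work with reduced irreducible components, so that function fields make sense, and to check that a birational finite morphism really yields the $\beta_j$ as rational functions on $V$.

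\textbf{Conclusion.} Combining Steps 1--4, the set of $v\in V(k)$ at which some $a_i$ has a pole, or at which $f_v$ is reducible over $k$, lies in a finite union of sets of types $\mathrm{(C_1)}$ and $\mathrm{(C_2)}$. It is therefore thin, which proves the proposition.
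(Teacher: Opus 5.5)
Your proof is correct, but it is not the route behind the paper. The paper gives no argument of its own: the statement is quoted from \cite[\S3.3]{Serre2008}, where the treatment is organized around Galois coverings.

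\textbf{Serre's route.} One takes the Galois covering $W\to V$ attached to a splitting field of $f$ over $k(V)$, with group $G$. A $k$-point (away from the branch locus) whose decomposition group lies in a proper subgroup $H$ lifts to a $k$-point of the quotient cover $W/H$, which has degree $[G:H]\ge2$. So outside a thin set the decomposition group is all of $G$, and transitivity of $G$ on the roots then forces the specialization to be irreducible.

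\textbf{Your route.} You bypass the Galois closure and the decomposition-group bookkeeping by working directly with the scheme $Z_e\to U$ of monic degree-$e$ factors and sorting its components. You exclude the degree-one case because it would produce a nontrivial factor of $f$ over $k(V)$.

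\textbf{How they compare.} The two arguments use the same mechanism from different sides. Generically, the dominant components of $Z_e$ are the quotients $W/H_S$ by stabilizers of $e$-element sets $S$ of roots. A component of degree one would correspond to a $G$-stable such set, which transitivity forbids. Serre's formulation gives the stronger conclusion that the full Galois group survives specialization outside a thin set. Yours is more elementary and self-contained, and it yields exactly the irreducibility needed in Corollary~\ref{cor:Hilbert}.

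\textbf{One step to tighten.} Write out the finiteness of $\pi_e$ more carefully. $U$ need not be affine, and describing each $b_j$ through roots of $f_v$ is only a pointwise statement. It is cleaner to observe that $Z_e$ is the base change along $(a_1,\ldots,a_d)\colon U\to\mathbb A^d$ of the multiplication map $\mathbb A^e\times\mathbb A^{d-e}\to\mathbb A^d$, $(g,h)\mapsto gh$. When $g$ divides $f_v$, the cofactor $h$ is determined polynomially by $g$, which gives this identification. That map is finite because the coefficients of monic factors of a monic polynomial are integral over its coefficients, and finiteness is preserved under base change.
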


\end{samepage}

\begin{theorem}[Hilbert]\label{thm:Serre-Hilbert}
If $k$ is a number field, then every affine space $\mathbb A^n$ and every
projective space $\mathbb P^n$ has the Hilbert property over $k$.
\end{theorem}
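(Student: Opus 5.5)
The plan is to reduce everything to the one-dimensional affine case over $\mathbb Q$ and prove that case by a counting argument. The reductions are largely formal consequences of the definitions of types $\mathrm{(C_1)}$ and $\mathrm{(C_2)}$; the arithmetic input sits entirely in the case of $\mathbb A^1$.

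\emph{Projective space.} First I dispose of $\mathbb P^n$. The standard affine chart $\mathbb A^n\hookrightarrow\mathbb P^n$ is an open immersion with Zariski dense image. A thin subset of $\mathbb P^n(k)$ intersects $\mathbb A^n(k)$ in a thin subset of $\mathbb A^n(k)$, and this comes in two parts:
\begin{itemize}
\item type $\mathrm{(C_1)}$ pieces restrict to type $\mathrm{(C_1)}$ pieces;
\item for a type $\mathrm{(C_2)}$ piece $\pi(V'(k))$, restricting $\pi$ over the chart gives a generically surjective morphism of the same degree from an open subvariety of $V'$.
\end{itemize}
Hence if $\mathbb P^n(k)$ were thin, so would be $\mathbb A^n(k)$. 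It therefore suffices to treat $\mathbb A^n$.

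\emph{Reduction to $k=\mathbb Q$.} Next I reduce to the rational numbers using Weil restriction of scalars. If $V'\to\mathbb A^n_k$ has degree at least two, I apply restriction of scalars $R_{k/\mathbb Q}$. Since $R_{k/\mathbb Q}\mathbb A^n_k\cong\mathbb A^{n[k:\mathbb Q]}_{\mathbb Q}$, the thin subsets of $\mathbb A^n(k)$ become thin subsets of $\mathbb A^{n[k:\mathbb Q]}(\mathbb Q)$. Serre proves that thinness is preserved this way (Proposition~3.2.1 in \cite{Serre2008}), and I would follow that argument.

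\emph{Reduction to $n=1$.} The next reduction uses fibration. Given finitely many covers $\pi_i\colon V_i'\to\mathbb A^n$ of degree at least two, I restrict them to lines $\{(t,a_2,\ldots,a_n)\}$, or better to rational curves of the form $x_j=a_j+b_jt$. For a set of parameters $(a,b)$ that is itself non-thin by induction, each restricted cover remains irreducible of degree at least two. This is the usual Bertini-type statement: its failure locus is a thin subset of the parameter space. The $\mathrm{(C_1)}$ pieces meet a generic line in finitely many points.

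\emph{The case $\mathbb A^1$ over $\mathbb Q$.} The heart of the matter, and the main obstacle, is showing that $\mathbb A^1(\mathbb Q)$, or even $\mathbb Z$, is not thin. The plan is to count points in a box:
\begin{enumerate}
\item A $\mathrm{(C_1)}$ set is finite.
\item For a cover $\pi\colon C\to\mathbb A^1$ of degree $d\ge2$, Puiseux expansion at infinity shows the following. If $\pi^{-1}(t)$ has a rational point for integers $t$, then $t$ lies in the image of finitely many branches $t=\sum_{j\ge -e}c_jy^{j}$ with ramification.
\item A Dörge/Siegel-style argument bounds the number of such integers $t\le B$ by $O(B^{1/2})$, or $O(B^{1-1/d})$.
\end{enumerate}
The estimate I would try first is the elementary Dörge argument. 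Alternatively I would use the Bombieri--Pila/Cohen bound that $\pi(C(\mathbb Q))\cap[-B,B]$ contains $O(B^{1/2}\log B)$ integers. Finitely many sets of density zero cannot cover $\mathbb Z\cap[-B,B]$, so $\mathbb A^1(\mathbb Q)$ is not thin.

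Since this is classical, in the paper itself I would simply cite \cite[Theorem~3.4.1]{Serre2008} rather than reproduce it.
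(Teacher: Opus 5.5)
Your proposal ends exactly where the paper does: the paper gives no argument for this statement and simply cites \cite[Theorem~3.4.1]{Serre2008}. Your sketch (pass from $\mathbb{P}^n$ to $\mathbb{A}^n$, reduce to $\mathbb{Q}$, cut down to lines, then show that the integers in a thin subset of $\mathbb{Q}$ have density zero) is the standard proof behind that reference. One muddled step in the line reduction is worth fixing if you keep the sketch. For general lines $x_j=a_j+b_jt$, Bertini already puts the bad parameters in a proper Zariski-closed subset of the $2n$-dimensional parameter space, so no induction is needed there. For axis-parallel lines $(t,a_2,\ldots,a_n)$, the bad locus is only thin (e.g.\ $y^2=x_2$ over the lines $x_2=a$ splits exactly for square $a$). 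Proving that it is thin is itself a Hilbert-type argument, and only then does induction on $n$ apply.
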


This is \cite[Theorem~3.4.1, p.~25]{Serre2008}.

We require the specialization coordinates to be nonzero, because a zero
coordinate would remove coefficients whose presence is needed in the
support count.  The preceding two results give exactly that additional
freedom.

\begin{corollary}[Hilbert specialization]
\label{cor:Hilbert}
Let $P(T_0,\ldots,T_s,x)\in\mathbb Q[T_0,\ldots,T_s,x]$ have positive
degree in $x$, and suppose that $P$ is irreducible over the rational
function field $\mathbb Q(T_0,\ldots,T_s)$.  Given finitely many proper rational
hyperplanes in $\mathbb A^{s+1}$, there is a rational point
$(t_0,\ldots,t_s)$ outside their union for which
$P(t_0,\ldots,t_s,x)$ is irreducible in $\mathbb Q[x]$.
\end{corollary}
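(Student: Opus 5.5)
The plan is to combine Proposition~\ref{prop:Serre} with Theorem~\ref{thm:Serre-Hilbert} on $V=\mathbb A^{s+1}$ over $k=\mathbb Q$. The key observation is that a finite union of proper hyperplanes is itself a thin set of type $\mathrm{(C_1)}$. So if the irreducibility-failure set is thin, adding the hyperplanes keeps it thin, and the Hilbert property supplies a rational point avoiding both.

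First, reduce to the monic setting of Proposition~\ref{prop:Serre}. Write
\[
P=\sum_{i=0}^d b_i(T)x^i \quad\text{with } b_d\neq0 \text{ in } \mathbb Q[T_0,\ldots,T_s],\ d\ge1,
\]
and set $f(X)=X^d+\sum_{i<d}(b_i/b_d)X^i$. Then $f$ is irreducible over $k(V)=\mathbb Q(T)$, because it is an associate of $P$ there. Proposition~\ref{prop:Serre} gives a thin set $\Sigma\subseteq\mathbb Q^{s+1}$ outside of which the $b_i/b_d$ have no pole and the specialized $f_t$ is irreducible over $\mathbb Q$.

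The step needing care is passing from $f_t$ back to $P_t=P(t,x)$. I will enlarge $\Sigma$ by the proper closed set $V(b_d)$, which is again type $\mathrm{(C_1)}$. For $t\notin V(b_d)$ we have $b_d(t)\ne0$, so
\[
P_t=b_d(t)f_t
\]
has degree $d\ge1$ and is irreducible in $\mathbb Q[x]$ whenever $f_t$ is.

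Next, let $H_1,\dots,H_k$ be the given proper rational hyperplanes. Their union is contained in $W(\mathbb Q)$ for the proper closed subvariety $W=H_1\cup\cdots\cup H_k$, so it is thin. Hence
\[
\Sigma'=\Sigma\cup V(b_d)(\mathbb Q)\cup\bigcup_j H_j(\mathbb Q)
\]
is a finite union of thin sets, and therefore thin. By Theorem~\ref{thm:Serre-Hilbert}, $\mathbb A^{s+1}(\mathbb Q)$ is not thin, so some $t\in\mathbb Q^{s+1}\setminus\Sigma'$ exists, and this $t$ has the required properties.

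I expect the main obstacle to be this leading-coefficient bookkeeping. The specialization statement is phrased for monic polynomials, so one must ensure the specialized leading coefficient does not vanish; otherwise the degree could drop, or the identity between $P_t$ and $f_t$ could fail. Excluding $V(b_d)$ handles this, and everything else is closure of thinness under finite unions.
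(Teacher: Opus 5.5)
Your proposal is correct and follows the paper's proof essentially step for step: normalize by the leading coefficient $b_d$ (the paper's $L$), apply Serre's specialization proposition to the monic associate, add $V(b_d)$ and the prescribed hyperplanes as thin sets of type $\mathrm{(C_1)}$, and conclude from the Hilbert property of $\mathbb A^{s+1}$ over $\mathbb Q$. Your explicit identity $P_t=b_d(t)f_t$ for $b_d(t)\neq0$ just spells out the degree-preservation step that the paper states in a single line.
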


\begin{proof}
Let $L(T_0,\ldots,T_s)$ be the leading coefficient of $P$ as a
polynomial in $x$.  Over the function field of $\mathbb A^{s+1}$, the
monic polynomial $P/L$ is irreducible.  Proposition~\ref{prop:Serre}
places the rational points giving a reducible specialization in a thin
set.  The zero set of $L$ is a proper closed subvariety, as is every
prescribed hyperplane; their rational points are thin sets of the first
type.  A finite union of these sets is thin.  By
Theorem~\ref{thm:Serre-Hilbert}, the rational points of
$\mathbb A^{s+1}$ are not thin, so there is a rational point outside that
union.  At this point the leading coefficient is nonzero, the degree in
$x$ is preserved, and the specialization is irreducible over $\mathbb Q$.
\end{proof}

We now assemble the geometric sums.  Multiplication by the powers
$x^{j(N+1)}$ places the exponent set of each polynomial strictly before
the exponent set belonging to the next index.  This will preserve both
the exact number of coefficients of the resulting polynomial and the
sum of the support sizes of its quotient by $\Phi_N$.

\begin{proposition}\label{prop:assembly}
Let $N$ be square-free and have at least two prime divisors.  Let
$\ell_0,\ell_1,\ldots,\ell_s$ be prime divisors of $N$, with every prime
divisor of $N$ occurring at least once, and put $h_j=j(N+1)$.  There are
nonzero rational numbers $t_0,\ldots,t_s$ such that
\begin{equation}\label{eq:F-and-Q}
 F(x)=\sum_{j=0}^s t_jx^{h_j}C_{\ell_j}(x)
     =\Phi_N(x)Q(x),
 \qquad
 Q(x)=\sum_{j=0}^s t_jx^{h_j}D_{\ell_j}(x),
\end{equation}
where $Q$ is irreducible over $\mathbb Q$.  Moreover,
\begin{equation}\label{eq:support-counts}
 |\supp(F)|=\sum_{j=0}^s\ell_j,
 \qquad
 |\supp(Q)|=\sum_{j=0}^s|\supp(D_{\ell_j})|,
\end{equation}
and $\Phi_N,Q$ are precisely the two nonassociate irreducible factors of
$F$ over $\mathbb Q$.
\end{proposition}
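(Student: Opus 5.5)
The plan is to apply Lemma~\ref{lem:generic} with $A_j(x)=x^{h_j}D_{\ell_j}(x)$, then Corollary~\ref{cor:Hilbert} with the coordinate hyperplanes $T_j=0$, and finally count supports using the disjointness forced by the shifts $h_j=j(N+1)$.

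\textbf{Irreducibility of $Q$.} First I check the gcd hypothesis of Lemma~\ref{lem:generic}. Each $D_p$ has constant coefficient one, so $x\nmid D_p$. Hence any common irreducible factor of the $A_j$ other than $x$ divides every $D_{\ell_j}$. Every prime divisor of $N$ occurs among the $\ell_j$, so such a factor would divide every $D_p$, which contradicts Lemma~\ref{lem:gcd}. The factor $x$ does not divide $A_0=D_{\ell_0}$, since $h_0=0$. So $\gcd(A_0,\ldots,A_s)=1$.

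Some $A_j$ is nonconstant: for $s\ge1$ the term $A_1$ already has degree $\geq N+1$. If $s=0$ the argument would need separate care, but since every prime divisor occurs and $r\ge2$, we have $s\ge1$. Thus $P=\sum T_jA_j$ is irreducible over $\mathbb Q(T_0,\ldots,T_s)$. Corollary~\ref{cor:Hilbert}, applied with the hyperplanes $T_j=0$, gives nonzero rationals $t_j$ for which $Q$ is irreducible over $\mathbb Q$. The identity $F=\Phi_NQ$ is immediate from $C_p=\Phi_ND_p$.

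\textbf{Support counts.} We have $\deg C_p=(p-1)N/p<N$ and $\deg D_p\le\deg C_p$. So the support of $x^{h_j}C_{\ell_j}$ lies in the interval $[j(N+1),\,j(N+1)+N-1]$. These intervals are pairwise disjoint for different $j$, and the same holds for the $D$'s. With no overlap there is no cancellation, and since the $t_j$ are nonzero, the supports add. This gives $|\supp(F)|=\sum_j\ell_j$ and $|\supp(Q)|=\sum_j|\supp(D_{\ell_j})|$.

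\textbf{The two factors are nonassociate.} The polynomial $\Phi_N$ is irreducible, so $F=\Phi_NQ$ is a factorization into two irreducibles. It remains to rule out that $Q$ is associate to $\Phi_N$. I would use the degree: $\deg Q\ge h_s\geq N+1>\varphi(N)=\deg\Phi_N$. Alternatively, $\Phi_N\mid Q$ fails because evaluating at a primitive $N$-th root gives $\sum t_j\zeta^{h_j}D_{\ell_j}(\zeta)$, which is harder to handle; the degree argument suffices.

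The main obstacle I expect is carefully verifying the gcd hypothesis including the shifts by powers of $x$. That is handled by the constant-coefficient observation together with $h_0=0$.
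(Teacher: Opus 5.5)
Your proposal is correct and follows essentially the same route as the paper. It checks the gcd hypothesis via the constant coefficient one of each $D_p$ together with $h_0=0$, applies Lemma~\ref{lem:generic} and then Corollary~\ref{cor:Hilbert} with the coordinate hyperplanes excluded, counts supports using disjoint exponent blocks of length $N$, and rules out associativity by the degree comparison $\deg Q\ge h_s\ge N+1>\varphi(N)$. The bound $\deg Q\ge h_s$ holds, as in the paper, because the constant coefficient one of $D_{\ell_s}$ contributes the nonzero term $t_sx^{h_s}$, and no other block reaches that degree.
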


\begin{proof}
We first show that the shifts $h_j=j(N+1)$ place the summands in
pairwise disjoint blocks of exponents.  For every prime $p\mid N$, the
geometric sum in \eqref{eq:C-and-D} satisfies
$\deg C_p=N-N/p<N$.  Since $C_p=\Phi_ND_p$ and $\Phi_N$ is
nonconstant, we also have $\deg D_p<\deg C_p<N$.  Consequently,
\[
 \supp\bigl(x^{h_j}C_{\ell_j}\bigr),
 \ \supp\bigl(x^{h_j}D_{\ell_j}\bigr)
 \subseteq
 \{h_j,h_j+1,\ldots,h_j+N-1\}.
\]
If $j<k$, then $h_k-h_j=(k-j)(N+1)\ge N+1$.  Thus
$h_j+N-1<h_k$, so the exponent blocks belonging to two different
indices are disjoint, both for the polynomials $x^{h_j}C_{\ell_j}$ and
for the polynomials $x^{h_j}D_{\ell_j}$.

Suppose for the moment that $t_0,\ldots,t_s$ are arbitrary nonzero
rational numbers.  Multiplication by $x^{h_j}$ translates the support
by $h_j$, while multiplication by $t_j$ does not change it.  Since the
translated supports are pairwise disjoint, no coefficient contributed
by one summand can cancel a coefficient contributed by another.
Therefore
\[
 \supp(F)
 =
 \bigsqcup_{j=0}^s
 \bigl(h_j+\supp(C_{\ell_j})\bigr),
\]
where the union is disjoint.  The polynomial $C_{\ell_j}$ is the
geometric sum
$1+x^{N/\ell_j}+\cdots+x^{(\ell_j-1)N/\ell_j}$, so it has exactly
$\ell_j$ nonzero coefficients.  It follows that
\[
 |\supp(F)|
 =
 \sum_{j=0}^s|\supp(C_{\ell_j})|
 =
 \sum_{j=0}^s\ell_j.
\]
This proves the first equality in \eqref{eq:support-counts}.

The same argument gives
\[
 \supp(Q)
 =
 \bigsqcup_{j=0}^s
 \bigl(h_j+\supp(D_{\ell_j})\bigr),
\]
and therefore
\[
 |\supp(Q)|
 =
 \sum_{j=0}^s|\supp(D_{\ell_j})|.
\]
This proves the second equality in \eqref{eq:support-counts}.

Moreover, $C_p=\Phi_ND_p$ for every prime $p\mid N$.  Hence
\begin{align*}
 F(x)
 &=
 \sum_{j=0}^s t_jx^{h_j}C_{\ell_j}(x)\\
 &=
 \sum_{j=0}^s
 t_jx^{h_j}\Phi_N(x)D_{\ell_j}(x)\\
 &=
 \Phi_N(x)
 \sum_{j=0}^s t_jx^{h_j}D_{\ell_j}(x)
 =
 \Phi_N(x)Q(x),
\end{align*}
which is the factorization asserted in \eqref{eq:F-and-Q}.

It remains to choose the $t_j$ so that $Q$ is irreducible.  Put
$A_j(x)=x^{h_j}D_{\ell_j}(x)$ for $0\le j\le s$.  We claim that
$\gcd(A_0,\ldots,A_s)=1$.

Indeed, let $g\in\mathbb Q[x]$ divide every $A_j$.  Since $h_0=0$, we
have $A_0=D_{\ell_0}$, and hence $g\mid D_{\ell_0}$.  The polynomial
$D_{\ell_0}$ has constant coefficient one, so it is not divisible by
$x$.  Therefore $x\nmid g$, and consequently
$\gcd(g,x^{h_j})=1$ for every $0\le j\le s$.

For every $j$, we know that $g\mid x^{h_j}D_{\ell_j}$.  Since $g$ is
coprime to $x^{h_j}$, Euclid's lemma gives $g\mid D_{\ell_j}$.  Every
prime divisor $p$ of $N$ occurs among the $\ell_j$.  Consequently,
$g$ divides $D_p$ for every prime $p\mid N$.  Lemma~\ref{lem:gcd}
states that these polynomials have greatest common divisor one, so
$g$ must be a nonzero rational constant.  This proves
\[
 \gcd\bigl(x^{h_0}D_{\ell_0},\ldots,
           x^{h_s}D_{\ell_s}\bigr)=1.
\]

Because $N$ has at least two prime divisors and every prime divisor
occurs in the list, the list contains at least two entries; hence
$s\ge1$.  In particular,
$A_1(x)=x^{N+1}D_{\ell_1}(x)$ is nonconstant.
Lemma~\ref{lem:generic} therefore applies to $A_0,\ldots,A_s$ and
shows that the generic polynomial
\[
 \mathcal Q(T_0,\ldots,T_s,x)
 =
 \sum_{j=0}^sT_jx^{h_j}D_{\ell_j}(x)
\]
is irreducible over $\mathbb Q(T_0,\ldots,T_s)$.

Apply Corollary~\ref{cor:Hilbert} to $\mathcal Q$, excluding the
coordinate hyperplanes $T_0=0,T_1=0,\ldots,T_s=0$.  The corollary
supplies a rational point
$(t_0,\ldots,t_s)\in\mathbb Q^{s+1}$ outside all these hyperplanes for
which $Q(x)=\mathcal Q(t_0,\ldots,t_s,x)$ is irreducible over
$\mathbb Q$.  Being outside the coordinate hyperplanes means precisely
that every $t_j$ is nonzero.  Thus the disjoint-support arguments
proving the two equalities in \eqref{eq:support-counts} remain valid
for this specialization.

Finally, we show that $Q$ and $\Phi_N$ are not associates.  For every
$j<s$,
\[
 \deg\bigl(x^{h_j}D_{\ell_j}\bigr)
 \le h_j+N-1
 \le h_{s-1}+N-1
 =h_s-2.
\]
On the other hand, $D_{\ell_s}$ has constant coefficient one, so the
final summand $t_sx^{h_s}D_{\ell_s}(x)$ contains the nonzero term
$t_sx^{h_s}$.  No earlier summand contains a term of this degree.
Hence $\deg Q\ge h_s$.  Since $s\ge1$, we have
$\deg Q\ge h_s=s(N+1)\ge N+1$.  But
$\deg\Phi_N=\varphi(N)\le N-1$.  Thus $\deg Q\ne\deg\Phi_N$, so the two
irreducible polynomials cannot be associates.  Equation
\eqref{eq:F-and-Q} is therefore the complete factorization of $F$ over
$\mathbb Q$, up to multiplication by nonzero rational constants, and
its two nonassociate irreducible factors are precisely $\Phi_N$ and
$Q$.
\end{proof}

\section{Choosing the primes and prescribing exactly \texorpdfstring{$m$}{m}
coefficients}

It remains to choose $N$.  We need many prime divisors in order to make
$2^r$ large, but Proposition~\ref{prop:assembly} requires the sum of
their individual term counts to remain below $m$.  The prime number
theorem in arithmetic progressions provides precisely this balance.

For $(a,q)=1$, let $\pi(x;q,a)$ denote the number of primes $p\le x$
satisfying $p\equiv a\pmod q$.  Let $\varphi$ be Euler's totient
function and
$\operatorname{li}(x)=\int_2^x dt/\log t$.

\begin{theorem}[Siegel--Walfisz]\label{thm:SW}
Let $A>0$.  There is an absolute constant $c_1>0$ such that, whenever
$1\le q\le(\log x)^A$ and $(a,q)=1$,
\begin{equation}\label{eq:SW}
 \pi(x;q,a)=\frac{\operatorname{li}(x)}{\varphi(q)}
 +O_A\!\left(xe^{-c_1\sqrt{\log x}}\right).
\end{equation}
\end{theorem}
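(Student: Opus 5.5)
We would not reprove this from scratch inside the paper; it is the classical Siegel--Walfisz theorem and we cite it as such. The plan below is the standard route, written so that the dependence of the constants is visible: $c_1$ is absolute, and the implied constant depends only on $A$ and is ineffective.

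\emph{Reduction to characters.} Work first with $\psi(x;q,a)=\sum_{n\le x,\ n\equiv a\ (q)}\Lambda(n)$. Orthogonality of Dirichlet characters modulo $q$ gives
\[
\psi(x;q,a)=\frac{1}{\varphi(q)}\sum_{\chi\bmod q}\overline{\chi}(a)\,\psi(x,\chi),\qquad \psi(x,\chi)=\sum_{n\le x}\chi(n)\Lambda(n).
\]
It therefore suffices to show two things: $\psi(x,\chi_0)=x+O(xe^{-c\sqrt{\log x}})$, and $\psi(x,\chi)\ll_A xe^{-c\sqrt{\log x}}$ for every nonprincipal $\chi$ modulo $q\le(\log x)^A$. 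Replacing an imprimitive character by the primitive character inducing it changes $\psi(x,\chi)$ by only $O(\log^2(qx))$, so we may assume $\chi$ is primitive.

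\emph{Explicit formula and zero-free region.} For primitive $\chi$ we use the truncated explicit formula
\[
\psi(x,\chi)=-\sum_{|\gamma|\le T}\frac{x^\rho}{\rho}+O\!\Bigl(\frac{x\log^2(qxT)}{T}\Bigr),
\]
where the main term from $\rho=1$ appears only for $\chi_0$. We then apply the classical zero-free region $\sigma>1-c/\log(q(|t|+2))$ for $L(s,\chi)$. This region holds for all zeros except at most one real simple zero $\beta_1$, which can occur only for a single real character. Taking $T=\exp(\sqrt{\log x})$, and using $q\le(\log x)^A$ together with the zero-counting bound $N(T,\chi)\ll T\log(qT)$, the non-exceptional zeros contribute $\ll xe^{-c\sqrt{\log x}}$ with an absolute $c$.

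\emph{The main obstacle: the exceptional zero.} The term $x^{\beta_1}/\beta_1$ cannot be handled by the zero-free region. Here we invoke Siegel's theorem: for every $\varepsilon>0$ there is an ineffective $c(\varepsilon)>0$ with $\beta_1\le 1-c(\varepsilon)q^{-\varepsilon}$. Choosing $\varepsilon=1/(2A)$ gives $q^{\varepsilon}\le(\log x)^{1/2}$, so
\[
x^{\beta_1}\le x\exp\!\bigl(-c(\varepsilon)\sqrt{\log x}\bigr).
\]
This has the required shape after shrinking $c_1$ and absorbing $c(\varepsilon)$ into the $A$-dependent implied constant; this is exactly where the $O_A$ and the ineffectivity enter.

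\emph{Back to primes.} The prime powers $p^k$ with $k\ge2$ contribute only $O(\sqrt{x}\log x)$, so we pass from $\psi(x;q,a)$ to $\theta(x;q,a)$. Partial summation then converts the estimate $\theta(x;q,a)=x/\varphi(q)+E(x)$ into $\pi(x;q,a)=\operatorname{li}(x)/\varphi(q)+O_A(xe^{-c_1\sqrt{\log x}})$. Any constant dropped from the exponent at this step is harmless.
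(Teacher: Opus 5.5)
Citing the result is what the paper does: it gives no proof and quotes Theorem~12.1 of Koukoulopoulos, so there you agree with it. Your sketch of the standard argument, however, fails at the one step where the form of the statement matters, namely the claim that $c_1$ is absolute. With $\varepsilon=1/(2A)$ you get $x^{\beta_1}\le x\exp(-c(\varepsilon)\sqrt{\log x})$, and the Siegel constant $c(\varepsilon)$ sits in the exponent. If $c(\varepsilon)<c_1$, the ratio $\exp(-c(\varepsilon)\sqrt{\log x})/\exp(-c_1\sqrt{\log x})=\exp((c_1-c(\varepsilon))\sqrt{\log x})$ is unbounded, so it cannot be absorbed into an $A$-dependent multiplicative constant. ``Shrinking $c_1$'' to $\min\{c,c(1/(2A))\}$ instead makes $c_1$ depend, ineffectively, on $A$, which is exactly what the statement excludes.

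The repair is to use less than the full allowance $q\le(\log x)^A$ in Siegel's theorem. Take $\varepsilon=1/(4A)$, so $q^{\varepsilon}\le(\log x)^{1/4}$ and $(1-\beta_1)\log x\ge c(\varepsilon)(\log x)^{3/4}$, which exceeds $c_1\sqrt{\log x}$ once $\log x\ge(c_1/c(\varepsilon))^4$. That threshold depends only on $A$, and below it the trivial bound $|\pi(x;q,a)-\operatorname{li}(x)/\varphi(q)|\ll x$ goes into the $O_A$. Now $c_1$ is dictated only by the classical zero-free region, and all ineffectivity lives in the implied constant. The same device is tacitly needed when you bound $\log(qT)\le A\log\log x+\sqrt{\log x}$ by $\ll\sqrt{\log x}$.

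There is also a smaller omission. For a real character with an exceptional zero, the reflected zero $1-\beta_1$ lies near $s=0$, so its term $x^{1-\beta_1}/(1-\beta_1)$ is not controlled by the zero-free region and $N(T,\chi)$. One line using the effective bound $1-\beta_1\gg q^{-1/2}(\log q)^{-2}$ shows it is $\ll_A x^{1/2}(\log x)^{O(A)}$.

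None of this affects the paper, which applies the theorem only with the fixed modulus $q=5$.
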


This is \cite[Theorem~12.1, p.~118]{Koukoulopoulos2019}, with the same
notation.

For real $X\ge3$, let
\[
 \mathcal P(X)=
 \{p\le X:p\text{ is prime and }p\equiv2\text{ or }3\pmod5\}.
\]
Write $r(X)=|\mathcal P(X)|$ and
$B(X)=\sum_{p\in\mathcal P(X)}p$.  Taking $q=5$ and $a=2,3$ in
Theorem~\ref{thm:SW} is legitimate for all sufficiently large $X$:
$2$ and $3$ are both coprime to $5$, and the fixed modulus $5$ satisfies
$5\le(\log X)^A$.  Since $\varphi(5)=4$ and
$\operatorname{li}(X)\sim X/\log X$, adding the two resulting formulas
gives
\begin{equation}\label{eq:r-asymptotic}
 r(X)\sim\frac{X}{2\log X}.
\end{equation}

Partial summation converts this counting result into an asymptotic for
the sum of the primes:
\begin{equation}\label{eq:B-partial-summation}
 B(X)=Xr(X)-\int_2^Xr(t)\,dt.
\end{equation}

By \eqref{eq:r-asymptotic}, \(Xr(X)\sim X^{2}/(2\log X)\). Fix \(\eta\in(0,1)\), and choose \(T_{\eta}\) such that \((1-\eta)t/(2\log t)\le r(t)\le(1+\eta)t/(2\log t)\) for every \(t\ge T_{\eta}\). For \(X\ge T_{\eta}\),
\[
\int_{2}^{X}r(t)\,dt=\int_{2}^{T_{\eta}}r(t)\,dt+\int_{T_{\eta}}^{X}r(t)\,dt\]
and \[(1-\eta)\int_{T_{\eta}}^{X}\frac{t}{2\log t}\,dt
\le \int_{T_{\eta}}^{X}r(t)\,dt
\le(1+\eta)\int_{T_{\eta}}^{X}\frac{t}{2\log t}\,dt.
\]
Since the first integral is \(O_{\eta}(1)\), letting \(X\to\infty\) and then \(\eta\to0\) gives \(\int_{2}^{X}r(t)\,dt\sim\int_{2}^{X}t/(2\log t)\,dt\). Moreover,
\[
\left(\frac{X^{2}}{4\log X}\right)'=\frac{X}{2\log X}\left(1-\frac{1}{2\log X}\right)\sim\frac{X}{2\log X},
\]
and thus, \(\int_{2}^{X}t/(2\log t)\,dt\sim X^{2}/(4\log X)\). Substitution into \eqref{eq:B-partial-summation} therefore gives
\begin{equation}\label{eq:B-asymptotic}
 B(X)\sim\frac{X^2}{4\log X}.
\end{equation}

We can now complete the proof of the main theorem.

\begin{proof}[Proof of Theorem~\ref{thm:main-introduction}]
Let $m$ be a sufficiently large integer, and set
\begin{equation}\label{eq:X-and-N}
 X=\frac14\sqrt{m\log m},
 \qquad
 N=\prod_{p\in\mathcal P(X)}p.
\end{equation}
Put $r=r(X)$ and $B=B(X)$.  The integer $N$ is square-free, and each of
its prime divisors is congruent to $2$ or $3$ modulo $5$, as required in
Corollary~\ref{cor:large-small} and
Proposition~\ref{prop:simultaneous-support}.

Since
$\log X=\tfrac12\log m+\tfrac12\log\log m-\log4
       \sim\tfrac12\log m$, equations \eqref{eq:r-asymptotic} and
\eqref{eq:B-asymptotic} give
\begin{equation}\label{eq:r-and-B}
 r\sim\frac14\sqrt{\frac{m}{\log m}},
 \qquad
 B\sim\frac{m}{32}.
\end{equation}
Consequently, once $m$ is sufficiently large,
\begin{equation}\label{eq:usable-r-and-B}
 B<\frac m2,
 \qquad
 r>\frac18\sqrt{\frac{m}{\log m}}.
\end{equation}
After increasing the lower bound on $m$ once more, $r$ lies in the range
required by Proposition~\ref{prop:simultaneous-support}; also $X\ge3$, so
the primes $2$ and $3$ both belong to $\mathcal P(X)$.

We now write $m$ as a sum of prime divisors of $N$, using every prime at
least once.  Choose $\varepsilon\in\{0,1\}$ so that
$m-B-3\varepsilon$ is even, and define
\[
 a=\frac{m-B-3\varepsilon}{2}.
\]
The parity choice makes a an integer, and \(B< \frac{m}{2}\) makes \(a \geq 0\) for sufficiently large \(m\). Begin a finite list $\ell_0,\ldots,\ell_s$ with one copy of
every prime in $\mathcal P(X)$; add one copy of $3$ when $\varepsilon=1$
and add $a$ copies of $2$.  Order the list with $\ell_0=2$.
Every prime divisor of $N$ occurs, and the sum of the entries is exactly
\begin{equation}\label{eq:exact-m}
 \sum_{j=0}^s\ell_j=B+3\varepsilon+2a=m.
\end{equation}

The integer $N$ is square-free and has at least the two distinct prime
divisors $2$ and $3$; every $\ell_j$ divides $N$, and every prime divisor of $N$ occurs in
the list.  Thus every hypothesis of Proposition~\ref{prop:assembly} is
satisfied.  Applying that proposition supplies a
polynomial $F\in\mathbb Q[x]$ whose complete irreducible factorization,
up to rational constants, is $F=\Phi_NQ$.  The first equality in
\eqref{eq:support-counts}, together with \eqref{eq:exact-m}, proves that
$F$ has exactly $m$ nonzero coefficients.

Both irreducible factors have many nonzero coefficients.  Indeed,
Proposition~\ref{prop:simultaneous-support} gives
\[
 |\supp(\Phi_N)|>c_0 2^r.
\]
The second equality in \eqref{eq:support-counts} expresses
$|\supp(Q)|$ as a sum of positive support sizes, one for every entry in
the list.  Each summand is greater than $c_0 2^r$ by the same proposition;
in particular,
\[
 |\supp(Q)|>c_0 2^r.
\]
Thus every irreducible factor of $F$ has more than $c_0 2^r$ nonzero
coefficients.

The lower bound for $r$ in \eqref{eq:usable-r-and-B} implies, after
decreasing the absolute constant $c>0$ to absorb the fixed factor $c_0$,
that
\[
 c_0 2^r>  \exp\!\left(c\sqrt{\frac{m}{\log m}}\right)
\]
for all sufficiently large $m$. Hence every irreducible factor of $F$
satisfies \eqref{eq:main-bound}, which completes the proof.
\end{proof}

\bibliographystyle{plain}
\bibliography{superpolynomial_references}

\end{document}